\def\MR#1{}
\newcommand{\orcid}[1]{\,\resizebox{8px}{!}{\href{https://orcid.org/#1}{\includegraphics{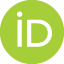}}}}
\newcommand{\ZZ}{\mathbb{Z}}
\newcommand{\PP}{\mathbb{P}}
\newcommand{\CC}{\mathbb{C}}
\newcommand{\QQ}{\mathbb{Q}}
\newcommand{\RR}{\mathbb{R}}
\newcommand{\FF}{\mathbb{F}}
\newcommand{\Cstar}{\CC^\times}
\newcommand{\Ghat}{\widehat{G}}
\newcommand{\tti}{\mathtt{i}}
\DeclareMathOperator{\Spec}{Spec}
\newcommand{\stderr}{s_{\text{int}}}
\DeclareMathOperator{\MLP}{MLP}
\theoremstyle{plain} 
\newtheorem{Lemma}{Lemma}
\newtheorem{Thm}[Lemma]{Theorem}
\newtheorem*{LocalThm}{Local Theorem}
\newtheorem{Prop}[Lemma]{Proposition}
\theoremstyle{definition}
\newtheorem{Ex}[Lemma]{Example}
\theoremstyle{remark}
\begin{document}
\author[T.\,Coates]{Tom Coates\orcid{0000-0003-0779-9735}}
\address{Department of Mathematics\\Imperial College London\\180 Queen's Gate\\London\\SW7 2AZ\\UK}
\email{t.coates@imperial.ac.uk}
\author[A.\,M.\,Kasprzyk]{Alexander M.\ Kasprzyk\orcid{0000-0003-2340-5257}}
\address{School of Mathematical Sciences\\University of Nottingham\\Nottingham\\NG7 2RD\\UK}
\email{a.m.kasprzyk@nottingham.ac.uk}
\author[S.\,Veneziale]{Sara Veneziale\orcid{0000-0003-2851-3820}}
\address{Department of Mathematics\\Imperial College London\\180 Queen's Gate\\London\\SW7 2AZ\\UK}
\email{s.veneziale21@imperial.ac.uk}
\makeatletter
\@namedef{subjclassname@2020}{%
  \textup{2020} Mathematics Subject Classification}
\makeatother
\keywords{Fano varieties, quantum periods, mirror symmetry, machine learning} 
\subjclass[2020]{14J45 (Primary); 68T07 (Secondary)}
\title{Machine Learning the Dimension of a Fano~Variety}
\begin{abstract}
Fano varieties are basic building blocks in geometry -- they are `atomic pieces' of mathematical shapes. Recent progress in the classification of Fano varieties involves analysing an invariant called the quantum period. This is a sequence of integers which gives a numerical fingerprint for a Fano variety. It is conjectured that a Fano variety is uniquely determined by its quantum period. If this is true, one should be able to recover geometric properties of a Fano variety directly from its quantum period. We apply machine learning to the question: does the quantum period of~$X$ know the dimension of~$X$? Note that there is as yet no theoretical understanding of this. We show that a simple feed-forward neural network can determine the dimension of~$X$ with $98\%$ accuracy. Building on this, we establish rigorous asymptotics for the quantum periods of a class of Fano varieties. These asymptotics determine the dimension of~$X$ from its quantum period. Our results demonstrate that machine learning can pick out structure from complex mathematical data in situations where we lack theoretical understanding. They also give positive evidence for the conjecture that the quantum period of a Fano variety determines that variety. 
\end{abstract}
\maketitle
\section{Introduction}
Algebraic geometry describes shapes as the solution sets of systems of polynomial equations, and manipulates or analyses a shape~$X$ by manipulating or analysing the equations that define~$X$. This interplay between algebra and geometry has applications across mathematics and science; see e.g.~\cite{vanLintvanderGeer1988,NiederreiterXing2009,AtiyahDrinfeldHitchinManin1978,ErikssonRanestadSturmfelsSullivant2005}. Shapes defined by polynomial equations are called~\emph{algebraic varieties}. Fano varieties are a key class of algebraic varieties. They are, in a precise sense, atomic pieces of mathematical shapes~\cite{Kollar1987,KollarMori1998}. Fano varieties also play an essential role in string theory. They provide, through their `anticanonical sections', the main construction of the Calabi--Yau manifolds which give geometric models of spacetime~\cite{CandelasHorowitzStromingerWitten1985,Greene1997,Polchinski2005}.

The classification of Fano varieties is a long-standing open problem. The only one-dimensional example is a line; this is classical. The ten smooth two-dimensional Fano varieties were found by del~Pezzo in the 1880s~\cite{DelPezzo1887}. The classification of smooth Fano varieties in dimension three was a triumph of 20th century mathematics: it combines work by Fano in the~1930s, Iskovskikh in the~1970s, and Mori--Mukai in the 1980s~\cite{Fano1947,Iskovskikh1977,Iskovskikh1978,Iskovskikh1979,MoriMukai1982,MoriMukai1982Erratum}. Beyond this, little is known, particularly for the important case of Fano varieties that are not smooth.

A new approach to Fano classification centres around a set of ideas from string theory called Mirror Symmetry~\cite{CandelasdelaOssaGreenParkes1991,GreenePlesser1990,HoriVafa2000,CoxKatz1999}. From this perspective, the key invariant of a Fano variety is its~\emph{regularized quantum period}~\cite{CoatesCortiGalkinGolyshevKasprzyk2013}
\begin{equation} \label{eq:Ghat}
    \Ghat_X(t) = \sum_{d=0}^\infty c_d t^d
\end{equation}
This is a power series with coefficients~$c_0 = 1$,~$c_1 = 0$, and $c_d = r_d d!$, where~$r_d$ is a certain Gromov--Witten invariant of~$X$. Intuitively speaking,~$r_d$ is the number of rational curves in~$X$ of degree~$d$ that pass through a fixed generic point and have a certain constraint on their complex structure. In general~$r_d$ can be a rational number, because curves with a symmetry group of order~$k$ are counted with weight~$1/k$, but in all known cases the coefficients~$c_d$ in~\eqref{eq:Ghat} are integers. 

It is expected that the regularized quantum period $\Ghat_X$ uniquely determines~$X$. This is true (and proven) for smooth Fano varieties in low dimensions, but is unknown  in dimensions four and higher, and for Fano varieties that are not smooth.

In this paper we will treat the regularized quantum period as a numerical signature for the Fano variety~$X$, given by the sequence of integers~$(c_0,c_1,\ldots)$. \emph{A priori} this looks like an infinite amount of data, but in fact there is a differential operator~$L$ such that~$L \Ghat_X\equiv 0$; see e.g.~\cite[Theorem~4.3]{CoatesCortiGalkinGolyshevKasprzyk2013}. This gives a recurrence relation that determines all of the coefficients~$c_d$ from the first few terms, so the regularized quantum period~$\Ghat_X$ contains only a finite amount of information. Encoding a Fano variety~$X$ by a vector in~$\ZZ^{m+1}$ given by finitely many coefficients~$(c_0,c_1,\ldots,c_m)$ of the regularized quantum period allows us to investigate questions about Fano varieties using machine learning.

In this paper we ask whether the regularized quantum period of a Fano variety~$X$ knows the dimension of~$X$. There is currently no viable theoretical approach to this question. Instead we use machine learning methods applied to a large dataset to argue that the answer is probably yes, and then prove that the answer is yes for toric Fano varieties of low Picard rank. The use of machine learning was essential to the formulation of our rigorous results (Theorems~\ref{thm:wps} and~\ref{thm:rank_2} below). This work is therefore proof-of-concept for a larger program, demonstrating that machine learning can uncover previously unknown structure in complex mathematical datasets. Thus the Data Revolution, which has had such impact across the rest of science, also brings important new insights to pure mathematics~\cite{DaviesEtAl2021,He2022,Wagner2021,ErbinFinotello2021, LevittHajijSazdanovic2022, WuDeLoera2022}. This is particularly true for large-scale classification questions, e.g.~\cite{KreuzerSkarke2000,ConwayCurtisNortonParkerWilson1985,Cremona2016,LieAtlas,CoatesKasprzyk2022}, where these methods can potentially reveal both the classification itself and structural relationships within~it.
\section{Results}
\subsubsection*{Algebraic varieties can be smooth or have singularities}
Depending on their equations, algebraic varieties can be smooth (as in Figure~\ref{fig:non_singular}) or have singularities (as in Figure~\ref{fig:singular}). In this paper we consider algebraic varieties over the complex numbers. The equations in Figures~\ref{fig:non_singular} and~\ref{fig:singular} therefore define complex surfaces; however, for ease of visualisation, we have plotted only the points on these surfaces with co-ordinates that are real numbers.

Most of the algebraic varieties that we consider below will be singular, but they all have a class of singularities called \emph{terminal quotient singularities}. This is the most natural class of singularities to allow from the point of view of Fano classification~\cite{KollarMori1998}. Terminal quotient singularities are very mild; indeed, in dimensions one and two, an algebraic variety has terminal quotient singularities if and only if it is smooth.

\begin{figure}[tbp]
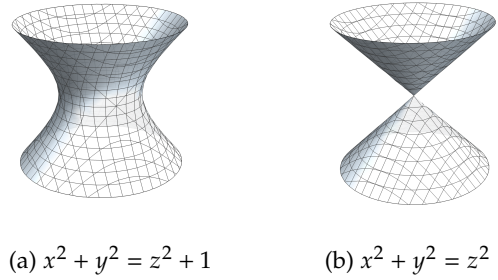

  \centering
  \begin{subfigure}{.2\textwidth}
    \includegraphics[width=\linewidth]{non_singular.png}
    \caption{$x^2+y^2 = z^2 + 1$}\label{fig:non_singular}
  \end{subfigure}
  \qquad
  \begin{subfigure}{.2\textwidth}
    \includegraphics[width=\linewidth]{singular.png}
    \caption{$x^2 + y^2 = z^2$}\label{fig:singular}
  \end{subfigure}
  \caption{Algebraic varieties and their equations:
    (a)~a smooth example; (b)~an example with a singular point.}\label{fig:singular_or_not}
\end{figure}
\subsubsection*{The Fano varieties that we consider}
The fundamental example of a Fano variety is projective space~$\PP^{N-1}$. This is a quotient of~$\CC^N \setminus \{0\}$ by the group~$\Cstar$, where the action of~$\lambda \in \Cstar$ identifies the points $(x_1,x_2,\ldots,x_N)$ and~$(\lambda x_1, \lambda x_2, \ldots, \lambda x_N)$. The resulting algebraic variety is smooth and has dimension~$N-1$. We will consider generalisations of projective spaces called~\emph{weighted projective spaces} and~\emph{toric varieties of Picard rank two}. A detailed introduction to these spaces is given in~\S\ref{sec:supp_notes}.

To define a weighted projective space, choose positive integers~$a_1,a_2,\ldots,a_N$ such that any subset of size~$N-1$ has no common factor, and consider
\[
\PP(a_1,a_2,\ldots,a_N) = (\CC^N \setminus \{0\})/\Cstar
\]
where the action of~$\lambda \in \Cstar$ identifies the points
\[
\begin{aligned}
    (x_1,x_2,\ldots,x_N) && \text{and} && (\lambda^{a_1} x_1, \lambda^{a_2} x_2, \ldots, \lambda^{a_N} x_N)
\end{aligned}
\]
\noindent in~$\CC^N \setminus \{0\}$. The quotient~$\PP(a_1,a_2,\ldots,a_N)$ is an algebraic variety of dimension~$N-1$. A general point of~$\PP(a_1,a_2,\ldots,a_N)$ is smooth, but there can be singular points. Indeed, a weighted projective space~$\PP(a_1,a_2,\ldots,a_N)$ is smooth if and only if $a_i = 1$ for all~$i$, that is, if and only if it is a projective space.

To define a toric variety of Picard rank two, choose a matrix 
\begin{equation} \label{eq:weights}
    \begin{pmatrix}
        a_1 & a_2 & \cdots & a_N \\
        b_1 & b_2 & \cdots & b_N 
    \end{pmatrix}
\end{equation}
with non-negative integer entries and no zero columns. This defines an action of~$\Cstar \times \Cstar$ on~$\CC^N$, where~$(\lambda,\mu) \in \Cstar \times \Cstar$ identifies the points
\[
\begin{aligned}
    (x_1,x_2,\ldots,x_N) && \text{and} && (\lambda^{a_1} \mu^{b_1} x_1, \lambda^{a_2} \mu^{b_2} x_2, \ldots, \lambda^{a_N} \mu^{b_N} x_N)
\end{aligned}
\]
in~$\CC^N$. Set~$a = a_1 + a_2 + \cdots + a_N$ and~$b = b_1 + b_2 + \cdots + b_N$, and suppose that~$(a,b)$ is not a scalar multiple of~$(a_i,b_i)$ for any~$i$. This determines linear subspaces
\[
\begin{aligned}
    S_+ & = \left\{ (x_1,x_2,\ldots,x_N)\mid \text{$x_i = 0$ if $b_i/a_i < b/a$} \right\} \\
    S_- & = \left\{ (x_1,x_2,\ldots,x_N)\mid \text{$x_i = 0$ if $b_i/a_i > b/a$} \right\}
\end{aligned}
\]
of~$\CC^N$, and we consider the quotient
\begin{equation} \label{eq:rank_2}
X = (\CC^N \setminus S)/(\Cstar \times \Cstar)
\end{equation}
where~$S = S_+ \cup S_-$. The quotient~$X$ is an algebraic variety of dimension~$N-2$ and second Betti number~$b_2(X) \leq 2$.  If, as we assume henceforth, the subspaces~$S_+$ and~$S_-$ both have dimension two or more then~$b_2(X) = 2$, and thus~$X$ has Picard rank two. In general~$X$ will have singular points, the precise form of which is determined by the weights in~\eqref{eq:weights}. 

There are closed formulas for the regularized quantum period of weighted projective spaces and toric varieties~\cite{CoatesCortiGalkinKasprzyk2016}. We have
\begin{equation} \label{eq:Ghat_wP}
       \Ghat_{\PP}(t) = \sum_{k = 0}^\infty \frac{(ak)!}{(a_1 k)! (a_2 k)! \cdots (a_N k)!} t^{ak}
\end{equation}
where~$\PP = \PP(a_1,\ldots,a_N)$ and~$a = a_1 + a_2 + \cdots + a_N$, and 
\begin{equation}
       \label{eq:Ghat_rank_two}
       \Ghat_X(t) =\!\!\! \sum_{(k, l) \in \ZZ^2 \cap C}\!\frac{(ak + bl)!}{(a_1 k+b_1 l)! \cdots (a_N k + b_N l)!} t^{ak + bl}
\end{equation}
where the weights for~$X$ are as in~\eqref{eq:weights}, and~$C$ is the cone in~$\RR^2$ defined by the equations~$a_i x + b_i y \geq 0$,~$i \in \{1,2,\ldots,N\}$. Formula~\eqref{eq:Ghat_wP} implies that, for weighted projective spaces, the coefficient $c_d$ from~\eqref{eq:Ghat} is zero unless~$d$ is divisible by~$a$. Formula~\eqref{eq:Ghat_rank_two} implies that, for toric varieties of Picard rank two, $c_d = 0$ unless $d$ is divisible by $\gcd\{a,b\}$.
\subsubsection*{Data generation: weighted projective spaces}
The following result characterises weighted projective spaces with terminal quotient singularities; this is~\cite[Proposition~2.3]{Kasprzyk2013}. 

\begin{Prop}\label{prop:wps_terminal}
Let~$X=\PP(a_1,a_2,\ldots,a_N)$ be a weighted projective space of dimension at least three. Then~$X$ has terminal quotient singularities if and only if
\[
\sum_{i=1}^N\{ka_i/a\}\in\{2,\ldots,N-2\}
\]
for each~$k\in\{2,\ldots,a-2\}$. Here~$a=a_1+a_2+\cdots+a_N$ and~$\{q\}$ denotes the fractional part~$q - \lfloor q \rfloor$ of~$q \in \QQ$.
\end{Prop}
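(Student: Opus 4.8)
The plan is to pass to the toric picture of $X = \PP(a_1,\dots,a_N)$ and to read off terminality from a lattice-point count. First I would recall the standard description: $X$ is a $\QQ$-factorial toric Fano variety whose fan lies in the lattice $L = \ZZ^N/\ZZ(a_1,\dots,a_N)$ (torsion-free because $\gcd(a_1,\dots,a_N)=1$), with primitive ray generators $u_1,\dots,u_N$ (the images of the standard basis vectors) subject to the single relation $\sum_i a_i u_i = 0$ and maximal cones $\sigma_j = \mathrm{cone}(u_i : i\ne j)$. Writing $a = \sum_i a_i$, the $j$-th facet of the Fano simplex $P = \mathrm{conv}(u_1,\dots,u_N)$ has supporting functional $\varphi_j\colon [b_1,\dots,b_N]\mapsto \sum_i b_i - (a/a_j)b_j$, so the gauge of $P$ is $\|v\|_P = \max_j\varphi_j(v) = \sum_i b_i - a\min_i(b_i/a_i)$ for any integer representative $(b_1,\dots,b_N)$ of $v$, and $v\in P$ iff $\|v\|_P\le 1$. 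By the Reid--Tai criterion for terminality applied cone by cone, $X$ is terminal if and only if $P\cap L = \{0,u_1,\dots,u_N\}$. I would also use the degree homomorphism $\deg\colon L\to\ZZ/a$, $[b_1,\dots,b_N]\mapsto\sum_i b_i\bmod a$, which sends $0$ to $0$ and every $u_i$ to $1$, and I write $S(k) = \sum_{i=1}^N\{ka_i/a\}$, noting that $\gcd(a_1,\dots,a_N)=1$ forces $1\le S(k)\le N-1$ whenever $k\in\{1,\dots,a-1\}$.

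For the implication ``$X$ terminal $\Rightarrow$ the stated condition'' I would argue by contraposition. Suppose $S(k)\notin\{2,\dots,N-2\}$ for some $k\in\{2,\dots,a-2\}$, so $S(k)=1$ or $S(k)=N-1$; in the latter case the identity $S(k) + S(a-k) = N - \#\{i : a\mid ka_i\}$ together with $S(a-k)\ge 1$ forces $S(a-k)=1$, so after replacing $k$ by $a-k$ if necessary we may assume $S(k)=1$. I claim the lattice point $v = [-\lfloor ka_1/a\rfloor,\dots,-\lfloor ka_N/a\rfloor]$ then violates $P\cap L = \{0,u_1,\dots,u_N\}$: since $\sum_i\lfloor ka_i/a\rfloor = k - S(k) = k-1$ and $\lfloor ka_i/a\rfloor/a_i\le k/a$, the gauge formula gives $\|v\|_P = -(k-1) + a\max_i\!\bigl(\lfloor ka_i/a\rfloor/a_i\bigr)\le 1$, so $v\in P$; while $\deg(v) = 1-k\bmod a$ is neither $0$ nor $1$ because $2\le k\le a-2$, so $v$ is neither the origin nor one of the $u_i$. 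Hence $X$ is not terminal.

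For the converse, suppose $X$ is not terminal, so some $v\in P\cap L$ is neither $0$ nor a vertex. Choose the representative $(b_1,\dots,b_N)$ of $v$ with $\mu = \min_i(b_i/a_i)\in[0,1)$ — so every $b_i\ge 0$ — and set $k = \sum_i b_i$. From $\|v\|_P = k - a\mu\le 1$ one gets $\mu\ge(k-1)/a$, hence $b_i\ge(k-1)a_i/a$ and so $b_i\ge\lceil(k-1)a_i/a\rceil$. A short check then gives $k\in\{2,\dots,a-1\}$: $k\in\{0,1\}$ would force $v=0$ or $v=u_i$, while $k\ge a$ forces $k=a$ (as $k\le 1+a\mu<1+a$), which gives $b_i\ge\lceil(a-1)a_i/a\rceil = a_i$ for all $i$ and hence $b_i=a_i$, i.e.\ $v=0$ again. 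Summing $b_i\ge\lceil(k-1)a_i/a\rceil$ over $i$ and using $\lceil x\rceil = x + \{-x\}$ yields $\sum_i\{-(k-1)a_i/a\}\le 1$, that is $S(a-k+1)\le 1$, hence $S(a-k+1)=1$; and since $N\ge 4$ we cannot have $a-k+1 = a-1$ (which would give $S(a-1)=N-1>1$), so $a-k+1\in\{2,\dots,a-2\}$ and the stated condition fails there.

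The ingredients I would take as known — the toric description of weighted projective space, the gauge formula for $P$, and the Reid--Tai criterion — are standard. The real content, and the main obstacle, is the arithmetic core of the two implications above: recognising the single extremal point $v = [-\lfloor ka_i/a\rfloor]_i$ as exactly the obstruction that appears when $S(k)=1$, proving conversely that any failure of terminality must produce such a point (this is where the careful choice of representative and the exclusion of the boundary values of $k$ do the work), and exploiting the symmetry $k\leftrightarrow a-k$ so that the single lower bound $S(k)\ge 2$ controls both ends of the interval $\{2,\dots,N-2\}$.
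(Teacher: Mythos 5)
The paper does not prove this proposition: it is imported verbatim as Proposition~2.3 of Kasprzyk's classification of terminal weighted projective spaces, so there is no in-paper argument to compare against. Judged on its own terms, your proof is correct and follows the standard route that the cited source also takes: realise $X$ as the toric variety of the simplex $P=\mathrm{conv}(u_1,\dots,u_N)$ in $L=\ZZ^N/\ZZ(a_1,\dots,a_N)$, reduce terminality to $P\cap L=\{0,u_1,\dots,u_N\}$, and translate lattice points of $P$ into residues $k$ with $\sum_i\{ka_i/a\}=1$. I checked the details: the gauge formula $\varphi_j(v)=\sum_i b_i-(a/a_j)b_j$ is right and well defined on $L$; well-formedness is exactly what rules out $S(k)=0$ for $k\in\{1,\dots,a-1\}$; the symmetry $S(k)+S(a-k)=N-\#\{i:a\mid ka_i\}$ correctly reduces the case $S(k)=N-1$ to $S(a-k)=1$; the point $v=[-\lfloor ka_i/a\rfloor]_i$ does lie in $P$ with $\deg v=1-k\bmod a\notin\{0,1\}$; and in the converse the normalisation $\mu\in[0,1)$, the exclusion of $k\in\{0,1,a\}$, and the boundary case $k=2$ (where $a-k+1=a-1$ would give $S(a-1)=N-1$, impossible alongside $S(a-1)=1$ when $N\ge 4$) are all handled correctly — this last point is the one place where the hypothesis $\dim X\ge 3$ is genuinely used, and you use it. The argument is complete as written.
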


\noindent  A simpler necessary condition is given by~\cite[Theorem~3.5]{Kasprzyk2009}:

\begin{Prop} \label{prop:wps_bound}
Let~$X=\PP(a_1,a_2,\ldots,a_N)$ be a weighted projective space of dimension at least two, with weights ordered~$a_1\leq a_2\leq\ldots\leq a_N$. If~$X$ has terminal quotient singularities then
$a_i/a<1/(N-i+2)$ for each~$i\in\{3,\ldots,N\}$.
\end{Prop}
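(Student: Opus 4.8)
The plan is to argue by contradiction, feeding a single well‑chosen value of $k$ into the terminality criterion of Proposition~\ref{prop:wps_terminal}. Write $a = a_1 + \cdots + a_N$ and $f(k) = \sum_{j=1}^N \{k a_j / a\}$. The first thing I would record is that $f(k)$ is always a non‑negative integer: since $\sum_{j=1}^N k a_j / a = k$, we have $f(k) = k - \sum_{j=1}^N \lfloor k a_j/a \rfloor \in \ZZ_{\geq 0}$. Hence, to contradict terminality it suffices to produce an admissible value $k \in \{2, \ldots, a-2\}$ with $f(k) \leq 1$, because $0$ and $1$ do not lie in $\{2, \ldots, N-2\}$.

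Assume first that $N \geq 4$ (the case $N = 3$ is dealt with at the end), and suppose the claimed bound fails for some $i \in \{3, \ldots, N\}$, so that $a_i \geq a/m$ with $m := N - i + 2 \geq 2$. Because the weights are ordered, each of the $m-1$ weights $a_i \leq a_{i+1} \leq \cdots \leq a_N$ is at least $a/m$, so their sum is at least $(m-1)a/m$, and therefore $s := a_1 + \cdots + a_{i-1} \leq a - (m-1)a/m = a/m$. Since $s$ is a sum of $i-1 \geq 2$ positive integers, $a/m \geq s \geq i - 1 \geq 2$, whence $a \geq 2m \geq 4$ and $m \leq a - 2$; so $k = m$ is an admissible test value. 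The point of this choice is that $k = m$ is just large enough to force every ``large'' weight to contribute a reduced fractional part, while the ``small'' weights are collectively too light to compensate.

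Carrying this out: I would split $f(m)$ into the sum over the large weights $a_i, \ldots, a_N$ and the sum over the small weights $a_1, \ldots, a_{i-1}$. For each large weight, $m a_j \geq m a_i \geq a$, so $\lfloor m a_j/a\rfloor \geq 1$ and $\{m a_j/a\} \leq m a_j/a - 1$; for each small weight I use only the trivial bound $\{m a_j/a\} \leq m a_j/a$. Adding these up, $f(m) \leq \tfrac{m}{a}\sum_{j=1}^N a_j - (m-1) = m - (m-1) = 1$. Together with $f(m) \in \ZZ_{\geq 0}$ this forces $f(m) \in \{0,1\}$, contradicting Proposition~\ref{prop:wps_terminal}, which applies since $N \geq 4$. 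The remaining case $N = 3$ is immediate: a terminal --- hence smooth --- weighted projective surface is $\PP^2$, so all weights equal $1$ and the inequality $a_3/a = 1/3 < 1/2$ holds trivially.

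I do not expect a serious technical obstacle; the argument rests entirely on spotting the right test value $k = N - i + 2$. The ``hard part'', such as it is, is the observation that precisely this $k$ makes each of the $m-1$ large fractional parts drop by a full unit while the small ones are bounded by $s \leq a/m$, so that the sum collapses to $f(m) \leq 1$; the integrality of $f$ then upgrades this to the needed contradiction. The only routine points are checking that $m$ lands in the admissible range $\{2, \ldots, a-2\}$ and handling the $N = 3$ corner case.
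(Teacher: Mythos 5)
The paper does not prove this proposition: it is imported verbatim from the cited reference (Theorem~3.5 of~\cite{Kasprzyk2009}), so there is no in-paper argument to compare yours against. Your proof is correct and self-contained, deriving the bound from the terminality criterion of Proposition~\ref{prop:wps_terminal} by testing the single value $k = m = N-i+2$. Every step checks out: the integrality of $f(k)=\sum_j\{ka_j/a\}$ follows from $f(k)=k-\sum_j\lfloor ka_j/a\rfloor$; the admissibility $2\leq m\leq a-2$ follows from $a\geq 2m\geq 4$, which you correctly extract from $a_1+\cdots+a_{i-1}\geq i-1\geq 2$ together with the assumed failure $a_i\geq a/m$ forcing the $m-1$ largest weights to absorb at least $(m-1)a/m$ of the total; the key estimate $\{ma_j/a\}\leq ma_j/a-1$ for $j\geq i$ uses exactly $\lfloor ma_j/a\rfloor\geq 1$, and summing gives $f(m)\leq m-(m-1)=1\notin\{2,\ldots,N-2\}$; and the $N=3$ case is correctly disposed of via smoothness of terminal surfaces (Proposition~\ref{prop:wps_terminal} requires dimension at least three, so this case split is genuinely needed). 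This is a complete proof of the statement, and in spirit it matches how such weight bounds are obtained in the cited literature, namely by exhibiting a witness $k$ for which the fractional-part sum collapses below the terminal range.
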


Weighted projective spaces with terminal quotient singularities have been classified in dimensions up to four~\cite{Kasprzyk2006,Kasprzyk2013}. Classifications in higher dimensions are hindered by the lack of an effective upper bound on~$a$.

We randomly generated~$150\,000$ distinct weighted projective spaces with terminal quotient singularities, and with dimension up to~$10$, as follows. We generated random sequences of weights $a_1\leq a_2\leq\ldots\leq a_N$ with~$a_N \leq 10N$ and discarded them if they failed to satisfy any one of the following:
\begin{enumerate}
    \item\label{item:wellformed} for each $i \in \{1,\ldots,N\}$, $\gcd\{a_1,\ldots,\widehat{a}_i,\ldots,a_N\}=1$, where $\widehat{a}_i$ indicates that $a_i$ is omitted;
    \item\label{item:terminal_bound} $a_i/a<1/(N-i+2)$ for each $i\in\{3,\ldots,N\}$;
    \item\label{item:terminal_iff} $\sum_{i=1}^N\{ka_i/a\}\in\{2,\ldots,N-2\}$ for each $k\in\{2,\ldots,a-2\}$.
\end{enumerate}
Condition~\eqref{item:wellformed} here was part of our definition of weighted projective spaces above; it ensures that the set of singular points in~$\PP(a_1,a_2,\ldots,a_N)$ has dimension at most~$N-2$, and also that weighted projective spaces are isomorphic as algebraic varieties if and only if they have the same weights. Condition~\eqref{item:terminal_bound} is from Proposition~\ref{prop:wps_bound}; it efficiently rules out many non-terminal examples. Condition~\eqref{item:terminal_iff} is the necessary and sufficient condition from Proposition~\ref{prop:wps_terminal}. We then deduplicated the sequences. The resulting sample sizes are summarised in Table~\ref{tab:both_samples}. 
\subsubsection*{Data generation: toric varieties}\label{sec:data_generation}
Deduplicating randomly-generated toric varieties of Picard rank two is harder than deduplicating randomly generated weighted projective spaces, because different weight matrices in~\eqref{eq:weights} can give rise to the same toric variety. Toric varieties are uniquely determined, up to isomorphism, by a combinatorial object called a~\emph{fan}~\cite{Fulton1993}. A~fan is a collection of cones, and one can determine the singularities of a toric variety~$X$ from the geometry of the cones in the corresponding fan. 

We randomly generated~$200\,000$ distinct toric varieties of Picard rank two with terminal quotient singularities, and with dimension up to~$10$, as follows. We randomly generated weight matrices, as in~\eqref{eq:weights}, such that~$0 \leq a_i, b_j \leq 5$. We then discarded the weight matrix if any column was zero, and otherwise formed the corresponding fan~$F$. We discarded the weight matrix unless:
\begin{enumerate}
    \item\label{item:non_degenerate} $F$ had~$N$ rays;
    \item\label{item:simplicial} each cone in~$F$ was simplicial (i.e.~has number of rays equal to its dimension);
    \item\label{item:terminal_fan} the convex hull of the primitive generators of the rays of~$F$ contained no lattice points other than the rays and the origin.
\end{enumerate}
Conditions~\eqref{item:non_degenerate} and~\eqref{item:simplicial} together guarantee that~$X$ has Picard rank two, and are equivalent to the conditions on the weight matrix in~\eqref{eq:weights} given in our definition. Conditions~\eqref{item:simplicial} and~\eqref{item:terminal_fan} guarantee that~$X$ has terminal quotient singularities. We then deduplicated the weight matrices according to the isomorphism type of~$F$, by putting~$F$ in normal form~\cite{KreuzerSkarke2004,GrinisKasprzyk2013}. See Table~\ref{tab:both_samples} for a summary of the dataset.

\begin{table*}[tbp]
    \small
    \centering
    \begin{tabular}{crr@{}p{4mm}@{}crr}
    \toprule
    \multicolumn{3}{c}{Weighted projective spaces} && 
    \multicolumn{3}{c}{Rank-two toric varieties} \\
    \cmidrule{1-3} \cmidrule{5-7} 
    Dimension&Sample size&Percentage&&Dimension&Sample size&Percentage\\
    \midrule
    1&1&0.001\\
    2&1&0.001&&2&2&0.001\\
    3&7&0.005&&3&17&0.009\\
    4&8\,936&5.957&&4&758&0.379\\
    5&23\,584&15.723&&5&6\,050&3.025\\
    6&23\,640&15.760&&6&19\,690&9.845\\
    7&23\,700&15.800&&7&35\,395&17.698\\
    8&23\,469&15.646&&8&42\,866&21.433\\
    9&23\,225&15.483&&9&47\,206&23.603\\
    10&23\,437&15.625&&10&48\,016&24.008\\
    \midrule
    Total&150\,000&&&Total&200\,000&\\
    \bottomrule
    \end{tabular}
    \caption{The distribution by dimension in our datasets.}
    \label{tab:both_samples}
\end{table*}
\subsubsection*{Data analysis: weighted projective spaces}
We computed an initial segment~$(c_0,c_1,\ldots,c_m)$ of the regularized quantum period for all the examples in the sample of~$150\,000$ terminal weighted projective spaces, with~$m \approx 100\,000$. The non-zero coefficients~$c_d$ appeared to grow exponentially with~$d$, and so we considered~$\{\log c_d\}_{d\in S}$ where $S= \{d \in \ZZ_{\geq 0}\mid c_d \neq 0\}$. To reduce dimension we fitted a linear model to the set~$\{(d, \log c_d)\mid d \in S\}$ and used the slope and intercept of this model as features; see Figure~\ref{fig:lin_approximation_pr1} for a typical example. Plotting the slope against the $y$\nobreakdash-intercept and colouring datapoints according to the dimension we obtain Figure~\ref{fig:colour_triangle_all_pr1}: note the clear separation by dimension. A Support Vector Machine~(SVM) trained on~10\% of the slope and $y$\nobreakdash-intercept data predicted the dimension of the weighted projective space with an accuracy of~99.99\%. Full details are given in~\S\S\ref{sec:supp_methods_1}--\ref{sec:supp_methods_2}.

\begin{figure}[tbp]
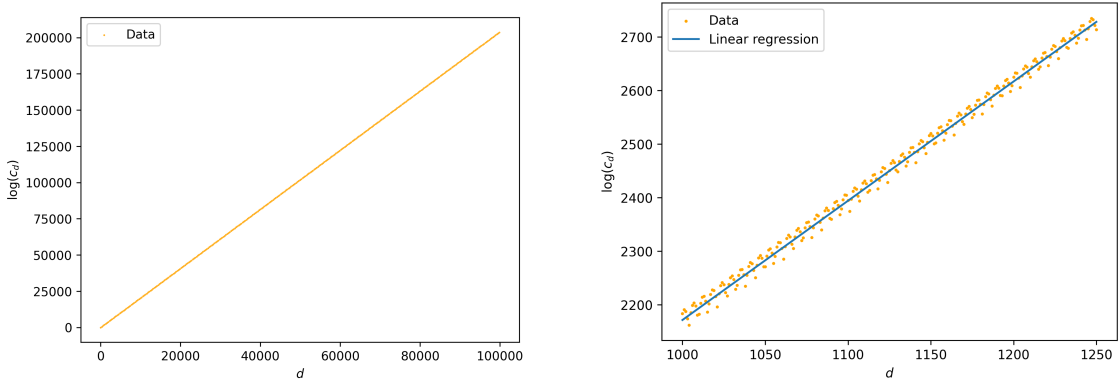

  \centering
  \begin{subfigure}{0.45\textwidth}
    \includegraphics[width=\linewidth]{linear_approx_pr1.png}
    \caption{$\log c_d$ for~$\PP(5,5,11,23,28,29,33,44,66,76)$.}\label{fig:lin_approximation_pr1}
  \end{subfigure}
  \qquad
  \begin{subfigure}{0.45\textwidth}
    \includegraphics[width=\linewidth]{linear_approx_last250.png}
    \caption{$\log c_d$ for~Example~\ref{eg:rank_2_growth}.}\label{fig:lin_approximation_pr2}
  \end{subfigure}
  \caption{The logarithm of the non-zero period coefficients~$c_d$: (a)~for a typical weighted projective space; (b)~for the toric variety of Picard rank two from Example~\ref{eg:rank_2_growth}.}
  \label{fig:linear_approximations}
\end{figure}

\begin{figure*}[tbp]
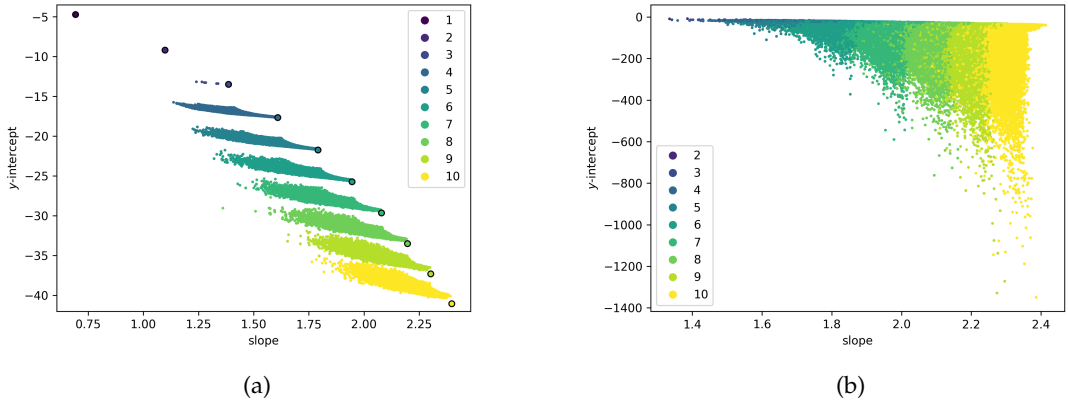

  \centering
  \begin{subfigure}{0.45\linewidth}
    \includegraphics[width=\textwidth]{colour_triangle_pr1.png}
    \caption{}
    \label{fig:colour_triangle_all_pr1}
  \end{subfigure}
  \qquad
  \begin{subfigure}{0.45\linewidth}
    \includegraphics[width=\textwidth]{colour_triangle.png}
    \caption{}
    \label{fig:colour_triangle_all}
  \end{subfigure}
  \caption{The slopes and $y$-intercepts from the linear models: (a)~for weighted projective spaces with terminal quotient singularities. The colour records the dimension of the weighted projective space and the circled points indicate projective spaces. (b)~for toric varieties of Picard rank two with terminal quotient singularities. The colour records the dimension of the toric variety.}
\end{figure*}
\subsubsection*{Data analysis: toric varieties}
As before, the non-zero coefficients~$c_d$ appeared to grow exponentially with~$d$, so we fitted a linear model to the set~$\{(d, \log c_d)\mid d \in S\}$ where~$S= \{d \in \ZZ_{\geq 0} \mid c_d \neq 0\}$. We used the slope and intercept of this linear model as features. 

\begin{Ex}\label{eg:rank_2_growth}
In Figure~\ref{fig:lin_approximation_pr2} we plot a typical example: the logarithm of the regularized quantum period sequence for the nine\nobreakdash-dimensional toric variety with weight matrix
\[
\left(\begin{array}{ccccccccccc}
1 & 2 & 5 & 3 & 3 & 3 & 0 & 0 & 0 & 0 & 0 \\
0 & 0 & 0 & 3 & 4 & 4 & 1 & 2 & 2 & 3 & 4
\end{array}\right)
\]
along with the linear approximation. We see a periodic deviation from the linear approximation; the magnitude of this deviation decreases as~$d$ increases (not shown). 
\end{Ex}

To reduce computational costs, we computed pairs~$(d,\log{c_d})$ for~$1000\leq d\leq 20\,000$ by sampling every~$100$th term. We discarded the beginning of the period sequence because of the noise it introduces to the linear regression. In cases where the sampled coefficient~$c_d$ is zero, we considered instead the next non-zero coefficient. The resulting plot of slope against $y$\nobreakdash-intercept, with datapoints coloured according to dimension, is shown in Figure~\ref{fig:colour_triangle_all}.

We analysed the standard errors for the slope and $y$\nobreakdash-intercept of the linear model. The standard errors for the slope are small compared to the range of slopes, but in many cases the standard error~$\stderr$ for the $y$\nobreakdash-intercept is relatively large. As Figure~\ref{fig:less_0.3_standard_error} illustrates, discarding data points where the standard error~$\stderr$ for the $y$\nobreakdash-intercept exceeds some threshold reduces apparent noise. This suggests that the underlying structure is being obscured by inaccuracies in the linear regression caused by oscillatory behaviour in the initial terms of the quantum period sequence; these inaccuracies are concentrated in the $y$\nobreakdash-intercept of the linear model. Note that restricting attention to those data points where~$\stderr$ is small also greatly decreases the range of $y$\nobreakdash-intercepts that occur. As Example~\ref{ex:outlier} and Figure~\ref{fig:outlier_intercept} suggest, this reflects both transient oscillatory behaviour and also the presence of a subleading term in the asymptotics of~$\log c_d$ which is missing from our feature set. We discuss this further below.

\begin{Ex} \label{ex:outlier}
Consider the toric variety with Picard rank two and weight matrix
\[
  \begin{pmatrix}
    1 & 10 & 5 & 13 & 8 & 12 & 0 \\
    0 & 0 & 3 & 8 & 5 & 14 & 1
  \end{pmatrix}
\]
This is one of the outliers in Figure~\ref{fig:colour_triangle_all}. The toric variety is five\nobreakdash-dimensional, and has slope~$1.637$ and $y$\nobreakdash-intercept $-62.64$.  The standard errors are~$4.246 \times 10^{-4}$ for the slope and~$5.021$ for the $y$\nobreakdash-intercept. We computed the first~$40\,000$ coefficients~$c_d$ in~\eqref{eq:Ghat}. As Figure~\ref{fig:outlier_intercept} shows, as~$d$ increases the $y$\nobreakdash-intercept of the linear model increases to~$-28.96$ and~$\stderr$ decreases to~$0.7877$. At the same time, the slope of the linear model remains more or less unchanged, decreasing to~$1.635$. This supports the idea that computing (many) more coefficients~$c_d$ would significantly reduce noise in Figure~\ref{fig:colour_triangle_all}. In this example, even~$40\,000$ coefficients may not be enough.
\end{Ex}

\begin{figure}[tbp]
       \centering
       \includegraphics[width=0.4\textwidth]{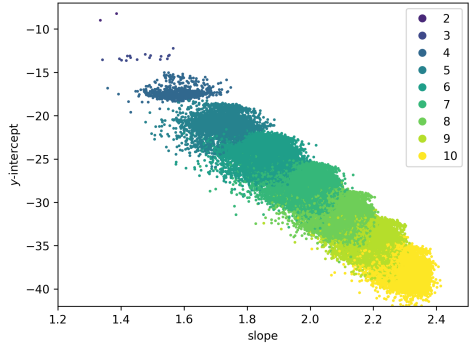}
       \caption{The slopes and $y$-intercepts from the linear model. This is as in Figure~\ref{fig:colour_triangle_all}, but plotting only data points for which the standard error~$\stderr$ for the $y$-intercept satisfies~$\stderr < 0.3$. The colour records the dimension of the toric variety.}
       \label{fig:less_0.3_standard_error}
\end{figure}

\begin{figure}[tbp]
       \centering
       \includegraphics[width=0.4\textwidth]{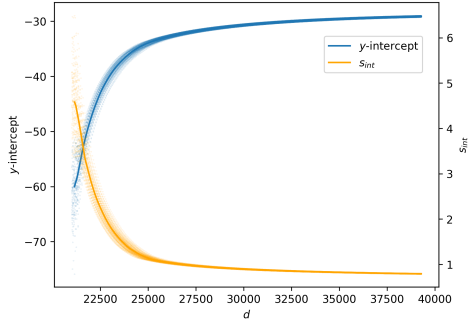}
       \caption{Variation as we move deeper into the period sequence. The $y$-intercept and its standard error $\stderr$ for the toric variety from Example~\ref{ex:outlier}, as computed from pairs~$(k,\log{c_k})$ such that~$d - 20\,000\leq k \leq d$ by sampling every~$100$th term. We also show LOWESS-smoothed trend lines.}
       \label{fig:outlier_intercept}
\end{figure}

Computing many more coefficients~$c_d$ across the whole dataset would require impractical amounts of computation time. In the example above, which is typical in this regard, increasing the number of coefficients computed from~$20\,000$ to~$40\,000$ increased the computation time by a factor of more than~$10$. Instead we restrict to those  toric varieties of Picard rank two such that the $y$\nobreakdash-intercept standard error~$\stderr$ is less than~$0.3$; this retains~$67\,443$ of the~$200\,000$ datapoints. We used~70\% of the slope and $y$\nobreakdash-intercept data in the restricted dataset for model training, and the rest for validation. An~SVM model predicted the dimension of the toric variety with an accuracy of~87.7\%, and a Random Forest Classifier~(RFC) predicted the dimension with an accuracy of~88.6\%.
\subsubsection*{Neural networks}
Neural networks do not handle unbalanced datasets well. We therefore removed the toric varieties of dimensions~3,~4, and~5 from our data, leaving~$61\,164$ toric varieties of Picard rank two with terminal quotient singularities and $\stderr < 0.3$. This dataset is approximately balanced by dimension.

A Multilayer Perceptron (MLP) with three hidden layers of sizes $(10,30,10)$ using the slope and intercept as features predicted the dimension with 89.0\% accuracy. Since the slope and intercept give good control over $\log c_d$ for $d \gg 0$, but not for small~$d$, it is likely that the coefficients $c_d$ with $d$ small contain extra information that the slope and intercept do not see. Supplementing the feature set by including the first $100$ coefficients $c_d$ as well as the slope and intercept increased the accuracy of the prediction to 97.7\%. Full details can be found in~\S\S\ref{sec:supp_methods_1}--\ref{sec:supp_methods_2}. 
\subsubsection*{From machine learning to rigorous analysis}
Elementary ``out of the box'' models (SVM, RFC, and MLP) trained on the slope and intercept data alone already gave a highly accurate prediction for the dimension. Furthermore even for the many-feature MLP, which was the most accurate, sensitivity analysis using SHAP values~\cite{LundbergLee2017} showed that the slope and intercept were substantially more important to the prediction than any of the coefficients $c_d$: see Figure~\ref{fig:SHAP_plot}. This suggested that the dimension of $X$ might be visible from a rigorous estimate of the growth rate of $\log c_d$. 

In~\S\ref{sec:methods} we establish asymptotic results for the regularized quantum period of toric varieties with low Picard rank, as follows. These results apply to any weighted projective space or toric variety of Picard rank two: they do not require a terminality hypothesis. Note, in each case, the presence of a subleading logarithmic term in the asymptotics for~$\log c_d$. 

\begin{Thm} \label{thm:wps}
    Let~$X$ denote the weighted projective space~$\PP(a_1,\ldots,a_N)$, so that the dimension of~$X$ is~$N-1$. Let~$c_d$ denote the coefficient of~$t^d$ in the regularized quantum period~$\Ghat_X(t)$ given in~\eqref{eq:Ghat_wP}. Let~$a = a_1 + \cdots + a_N$ and~$p_i = a_i/a$. Then~$c_d = 0$ unless~$d$ is divisible by~$a$, and non-zero coefficients~$c_d$ satisfy
    \[
        \log c_d \sim Ad - \frac{\dim{X}}{2} \log d + B
    \]
    as $d \to \infty$, where 
    \begin{align*}
       A &= - \sum_{i=1}^N p_i \log p_i \\
       B &= - \frac{\dim{X}}{2} \log(2 \pi) - \frac{1}{2} \sum_{i=1}^N \log p_i
    \end{align*}
\end{Thm}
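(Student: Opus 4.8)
The plan is to apply Stirling's approximation term-by-term to the closed formula~\eqref{eq:Ghat_wP} and then extract the dominant contribution by a Laplace-type argument. Write $d = ak$, so the coefficient $c_d$ equals the single multinomial term $\binom{ak}{a_1 k, \ldots, a_N k} = (ak)! / \prod_i (a_i k)!$; note that for a weighted projective space there is exactly one summand contributing to each nonzero $c_d$, which is what makes this case clean (no sum over a cone to control). Applying $\log n! = n\log n - n + \tfrac12\log(2\pi n) + O(1/n)$ to numerator and denominator, the linear-in-$k$ and the ``$-n$'' terms must be collected: the $n\log n$ terms give $ak\log(ak) - \sum_i a_i k \log(a_i k) = ak\bigl(\log a - \sum_i p_i\log(a_i)\bigr) - \ldots$, and after regrouping using $\sum_i a_i = a$ one finds the coefficient of $k$ is $-a\sum_i p_i\log p_i$, hence the coefficient of $d = ak$ is $A = -\sum_i p_i\log p_i$. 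The ``$-n$'' terms cancel since $\sum_i a_i k = ak$. The $\tfrac12\log(2\pi n)$ terms contribute $\tfrac12\log(2\pi a k) - \tfrac12\sum_i \log(2\pi a_i k) = -\tfrac{N-1}{2}\log(2\pi k) - \tfrac12\log a + \tfrac12\sum_i\log a_i + \tfrac12\log a - \ldots$; rewriting $\log k = \log d - \log a$ and $\dim X = N-1$ produces the $-\tfrac{\dim X}{2}\log d$ term together with the stated constant $B = -\tfrac{\dim X}{2}\log(2\pi) - \tfrac12\sum_i\log p_i$. The error terms are each $O(1/k) = O(1/d)$ and so vanish in the limit, giving the claimed asymptotic equivalence.

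Concretely, the steps I would carry out are: (1) restrict to $d = ak$ and write $\log c_d = \log(ak)! - \sum_{i=1}^N \log(a_i k)!$; (2) substitute the Stirling expansion with explicit error control, keeping the three leading terms $n\log n$, $-n$, $\tfrac12\log(2\pi n)$; (3) verify the $-n$ terms cancel exactly and collect the $n\log n$ terms into $Ad$ using $p_i = a_i/a$ and $\sum p_i = 1$; (4) collect the $\tfrac12\log(2\pi n)$ terms, convert $\log k$ to $\log d$, and identify the coefficient of $\log d$ as $-\dim X/2$ and the remaining constant as $B$; (5) bound the accumulated error by $O(1/d)$ and conclude.

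The main subtlety — I would not call it a serious obstacle in the weighted projective case, but it is the step requiring care — is the bookkeeping when regrouping the $n\log n$ terms: one must rewrite $a_i k \log(a_i k) = a_i k \log p_i + a_i k \log(ak)$, sum over $i$, and see the $a_i k \log(ak)$ pieces telescope against the numerator's $ak\log(ak)$ because $\sum_i a_i k = ak$, leaving precisely $-k\sum_i a_i \log p_i = -ak\sum_i p_i\log p_i = -Ad$ with the right sign. A parallel but slightly more delicate regrouping handles the logarithmic terms, where one must be careful that the ``extra'' $\tfrac12\log(2\pi)$ and $\tfrac12\log a$ factors combine correctly: there are $N$ of them in the denominator and one in the numerator, yielding the coefficient $-\tfrac{N-1}{2} = -\tfrac{\dim X}{2}$ on $\log(2\pi)$ and on the $\log$-of-the-variable term, while the $a_i$-dependence assembles into $-\tfrac12\sum_i\log a_i + \tfrac12\log a = -\tfrac12\sum_i \log p_i - \tfrac{N-1}{2}\log a$; the $-\tfrac{N-1}{2}\log a$ piece is exactly what is needed to turn $-\tfrac{\dim X}{2}\log k$ into $-\tfrac{\dim X}{2}\log d$. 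Once the algebra is organised this way the result follows directly, and the same method — now with a genuine two-dimensional lattice sum over the cone $C$, handled by a saddle-point/Laplace estimate — is what underlies the Picard-rank-two analogue.
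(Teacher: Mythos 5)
Your proposal is correct and follows essentially the same route as the paper, which proves this statement (via Theorem~\ref{thm:wps_methods}) simply by combining Stirling's formula with the closed formula~\eqref{eq:Ghat_wP} for~$c_{ka}$; you have merely written out the bookkeeping that the paper leaves implicit, and your algebra checks out.
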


Note, although it plays no role in what follows, that~$A$ is the Shannon entropy of the discrete random variable~$Z$ with distribution~$(p_1,p_2,\ldots,p_N)$, and that~$B$ is a constant plus half the total self-information of~$Z$.

\begin{Thm} \label{thm:rank_2} 
    Let~$X$ denote the toric variety of Picard rank two with weight matrix 
    \[
    \begin{pmatrix}
        a_1 & a_2 & a_3 &  \cdots & a_N \\
        b_1 & b_2 & b_3 & \cdots & b_N
    \end{pmatrix}
    \]
    so that the dimension of~$X$ is~$N-2$. Let~$a = a_1 + \cdots + a_N$, $b = b_1 + \cdots + b_N$, and $\ell = \gcd\{a,b\}$. Let~$[\mu:\nu] \in \PP^1$ be the unique root of the homogeneous polynomial 
    \[
        \prod_{i=1}^N (a_i \mu + b_i \nu)^{a_i b} -
        \prod_{i=1}^N (a_i \mu + b_i \nu)^{b_i a}
    \]     
    such that $a_i \mu + b_i \nu \geq 0$ for all~$i \in \{1,2,\ldots,N\}$, and set
    \[
        p_i = \frac{\mu a_i + \nu b_i}{\mu a + \nu b}
    \]    
    Let~$c_d$ denote the coefficient of~$t^d$ in the regularized quantum period~$\Ghat_X(t)$ given in~\eqref{eq:Ghat_rank_two}. Then non-zero coefficients~$c_d$ satisfy
    \[
        \log c_d \sim Ad - \frac{\dim{X}}{2} \log d + B
    \]
    as~$d \to \infty$, where 
    \begin{align*}
        A &= -\sum_{i=1}^N p_i \log p_i \\
        B &= - \frac{\dim{X}}{2} \log(2 \pi)\!-\!\frac{1}{2} \sum_{i=1}^N\log p_i\!-\!\frac{1}{2} \log\!\left(\sum_{i=1}^N\!\frac{(a_i b - b_i a)^2}{ \ell^2 p_i} \right) 
    \end{align*}
\end{Thm}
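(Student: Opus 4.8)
The plan is to treat $c_d$ as a sum of multinomial coefficients along a line in the cone $C$ and to extract its asymptotics by a lattice version of the Laplace (saddle-point) method, in the spirit of Theorem~\ref{thm:wps}, except that here the line meets $C$ in a genuine segment and there is a maximisation to carry out. Writing $m_i = a_i k + b_i l$ we have $\sum_i m_i = ak + bl$, so \eqref{eq:Ghat_rank_two} expresses $c_d$ as a sum of the multinomial coefficients $\binom{d}{m_1,\ldots,m_N}$ over the lattice points $(k,l)\in\ZZ^2\cap C$ on the line $ak+bl=d$. Since $(a,b)=\sum_i(a_i,b_i)$ pairs strictly positively with every column, $(a,b)$ lies in the interior of the pointed two-dimensional cone $C$, so for $d\gg 0$ divisible by $\ell$ this set is a segment whose number of lattice points grows linearly in $d$, consecutive points differing by the primitive vector $(b/\ell,-a/\ell)$. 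Parametrising the points by $s\in\ZZ$ and setting $q_i(s)=m_i(s)/d$ (affine in $s$, with $\sum_i q_i=1$), Stirling's formula gives, uniformly for the $q_i$ bounded away from $0$,
\[
\binom{d}{m_1,\ldots,m_N} = (1+o(1))\,\frac{\sqrt{2\pi d}}{(2\pi)^{N/2}\prod_i\sqrt{m_i}}\,\exp\!\big(d\,H(q(s))\big),\qquad H(q)=-\sum_i q_i\log q_i,
\]
so $\log c_d$ is controlled by $\max_s H(q(s))$ over the segment; moreover $\binom{d}{m_1,\ldots,m_N}\le\exp(d\,H(q(s)))$ holds with no error term, which will control the tails.

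Next I would locate the maximiser. Using $\sum_i q_i'(s)=0$ one finds
\[
\frac{d}{ds}H(q(s))=-\frac{1}{\ell d}\sum_i(a_ib-b_ia)\log q_i(s),\qquad \frac{d^2}{ds^2}H(q(s))=-\frac{1}{\ell^2 d^2}\sum_i\frac{(a_ib-b_ia)^2}{q_i(s)},
\]
and the second expression is strictly negative on the open segment because not all $a_ib-b_ia$ vanish — this is exactly the hypothesis that $(a,b)$ is not proportional to any $(a_i,b_i)$. Hence $H(q(\cdot))$ is strictly concave. At each endpoint the segment touches a ray of $C$, where $q_i\to 0$ for precisely the columns defining that facet; since $a_ib-b_ia=\det\!\begin{pmatrix}a_i & a\\ b_i & b\end{pmatrix}$ has constant sign on the columns proportional to a given extreme ray of $\operatorname{cone}\{(a_j,b_j)\}$, and opposite signs on the two extreme rays (as $(a,b)$ lies strictly between them), one gets $\frac{d}{ds}H(q(s))\to+\infty$ at one endpoint and $\to-\infty$ at the other. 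Therefore $H(q(\cdot))$ has a unique interior critical point $s^*$, which is its maximum, determined by $\sum_i(a_ib-b_ia)\log q_i(s^*)=0$. Writing $p_i=q_i(s^*)$ and $[\mu:\nu]=[k^*:l^*]$ for the corresponding real point of the line gives $p_i=(a_i\mu+b_i\nu)/(a\mu+b\nu)$; using $\sum_i(a_ib-b_ia)=0$ the critical-point equation becomes $\sum_i(a_ib-b_ia)\log(a_i\mu+b_i\nu)=0$, which on exponentiating is exactly the vanishing of the homogeneous polynomial in the statement, while the strict interiority $p_i>0$ gives simultaneously the existence and uniqueness of the root $[\mu:\nu]$ with $a_i\mu+b_i\nu\ge 0$.

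Then comes the Laplace estimate. Split the sum over $s$ at $|s-s^*|=d^{3/5}$. On the tail, strict concavity together with $\binom{d}{m_\bullet}\le e^{d H(q(s))}$ gives a contribution $O\!\big(d\,e^{d H(p)}e^{-c d^{1/5}}\big)$, negligible against the main term. On the central range, $q_i(s)=p_i+O(d^{-2/5})$ stays bounded below, the Stirling prefactor equals $(1+o(1))\sqrt{2\pi d}\,(2\pi)^{-N/2}d^{-N/2}\prod_i p_i^{-1/2}$ throughout, and Taylor expansion gives $d\,H(q(s))=d\,H(p)-\tfrac{V}{2\ell^2 d}(s-s^*)^2+O(d^{-1/5})$ with $V=\sum_i(a_ib-b_ia)^2/p_i$. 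Summing the resulting Gaussian over $s\in\ZZ$ — its width $\ell\sqrt{d/V}\to\infty$, so Poisson summation (equivalently Euler–Maclaurin) replaces the sum by $\int_{-\infty}^{\infty}e^{-Vu^2/(2\ell^2 d)}\,du=\ell\sqrt{2\pi d/V}$ up to exponentially small error — yields
\[
c_d\;\sim\;\frac{\sqrt{2\pi d}}{(2\pi)^{N/2}d^{N/2}\prod_i\sqrt{p_i}}\cdot\ell\sqrt{\frac{2\pi d}{V}}\cdot e^{d H(p)}.
\]
Taking logarithms and collecting the powers of $d$ — here $1-N/2=-\dim X/2$ because $\dim X=N-2$ — and the constants, with the lattice spacing supplying the factor $\ell$ and the Hessian supplying $V$, so that $\log\ell-\tfrac12\log V=-\tfrac12\log\!\big(\sum_i(a_ib-b_ia)^2/(\ell^2 p_i)\big)$, reproduces the stated $A=-\sum_i p_i\log p_i$ and $B$. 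Positivity of the leading constant also shows $c_d>0$ for all large $d$ divisible by $\ell$, matching the phrasing about non-zero coefficients.

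The main obstacle is the part that genuinely goes beyond Theorem~\ref{thm:wps}, where $c_d$ is a single multinomial coefficient and bare Stirling suffices: here one must handle the sum over the segment, and this has two faces. The conceptually delicate one is locating the maximiser — the orientation/sign argument establishing that the interior critical point exists, equivalently the existence and uniqueness of the root $[\mu:\nu]$ together with the strict positivity of the $p_i$; were the unconstrained critical point to fall outside the cone the leading asymptotics would change shape, so this step is essential. The laborious one is making the lattice Laplace argument uniform: controlling the Stirling error uniformly over the $\Theta(d)$ terms in the central range, and justifying the passage from the Gaussian sum to the Gaussian integral — the latter being precisely where the extra $\tfrac12\log d$ term and the $\log\ell$ and $-\tfrac12\log V$ pieces of $B$ enter.
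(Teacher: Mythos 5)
Your proposal is correct, and at the strategic level it is the same proof as the paper's: write $c_d$ as a sum of multinomial coefficients over the lattice points of the segment $\{ak+bl=d\}\cap C$, show the sum concentrates in a Gaussian fashion around an interior maximiser whose location is governed by exactly the homogeneous polynomial in the statement, and convert the resulting Gaussian sum into a one-dimensional integral whose normalisation produces the $\log\ell-\tfrac12\log V$ contribution to $B$. The differences are in execution. The paper locates the maximiser by a two-variable convexity and Lagrange-multiplier argument (Proposition~\ref{pro:minimum}), where you restrict to the segment and do one-variable calculus; these are equivalent, and your endpoint sign analysis of $\frac{d}{ds}H(q(s))$ delivers existence and uniqueness of the admissible root $[\mu:\nu]$ just as the paper's argument does. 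More substantively, the paper imports the Gaussian approximation of the multinomial coefficients as a black box (the ``Local Theorem'' of Gnedenko--Ushakov, a local central limit theorem generalising De Moivre--Laplace), whereas you rederive it directly from Stirling's formula; and where the paper treats the tail contribution and the Riemann-sum-to-integral step rather briskly, you make them quantitative via the entropy bound $\binom{d}{m_1,\ldots,m_N}\le e^{dH(q)}$, a cutoff at $|s-s^*|=d^{3/5}$, and Poisson summation. Your route buys self-containedness and explicit uniformity control over the $\Theta(d)$ summands; the paper's buys brevity and a conceptual link to probability. Both produce the same constants $A$ and $B$, and I see no gap in your outline.
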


Theorem~\ref{thm:wps} is a straightforward application of Stirling's formula. Theorem~\ref{thm:rank_2} is more involved, and relies on a Central Limit-type theorem that generalises the De Moivre--Laplace theorem.

\begin{figure}[tbp]
  \centering
  \includegraphics[width=0.4\textwidth]{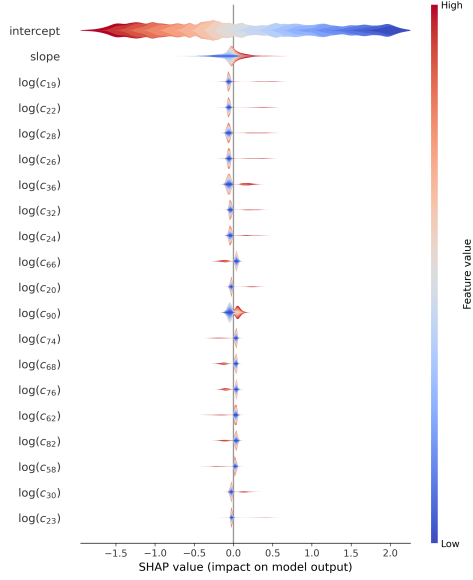}
  \caption{Model sensitivity analysis using SHAP values. The model is an MLP with three hidden layers of sizes (10,30,10) applied to toric varieties of Picard rank two with terminal quotient singularities. It is trained on the slope, $y$-intercept, and the first 100 coefficients $c_d$ as features, and predicts the dimension with 97.7\% accuracy.}
  \label{fig:SHAP_plot}
\end{figure}
\subsubsection*{Theoretical analysis}
\begin{figure*}[tbp]
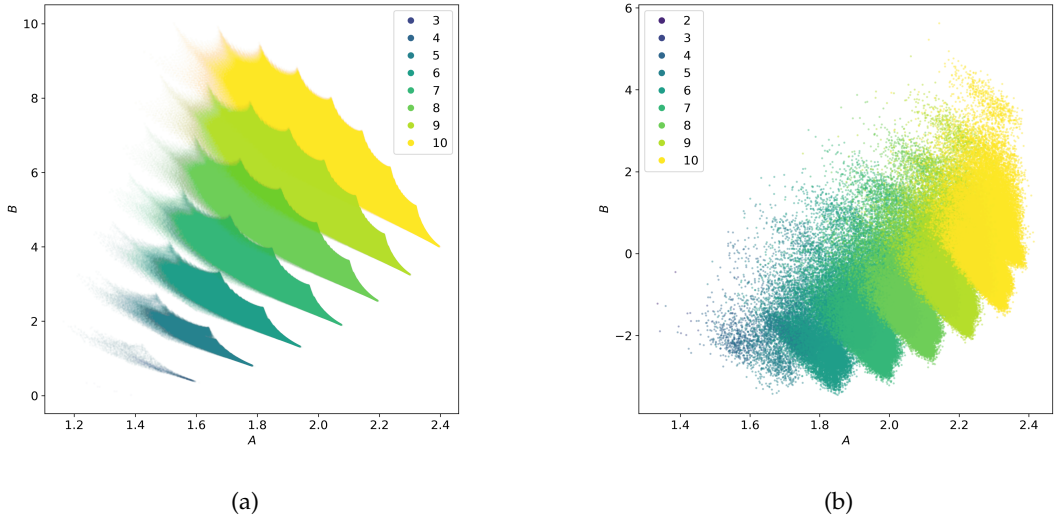

  \centering
  \begin{subfigure}{0.45\linewidth}
    \includegraphics[width=\linewidth]{hedgehog_data.png}
    \caption{}
    \label{fig:hedgehogs}
  \end{subfigure}
  \qquad
  \begin{subfigure}{0.45\linewidth}
    \includegraphics[width=\linewidth]{hedgehog_pr2.png}
    \caption{}
    \label{fig:hedgehogs_pr2}
  \end{subfigure}
  \caption{The values of the asymptotic coefficients~$A$ and~$B$: (a)~for all weighted projective spaces~$\PP(a_1,\ldots,a_N)$ with terminal quotient singularities and~$a_i \leq 25$ for all~$i$. The colour records the dimension of the weighted projective space. (b)~for toric varieties of Picard rank two in our dataset. The colour records the dimension of the toric variety.}
\end{figure*}

\begin{figure*}[tbp]
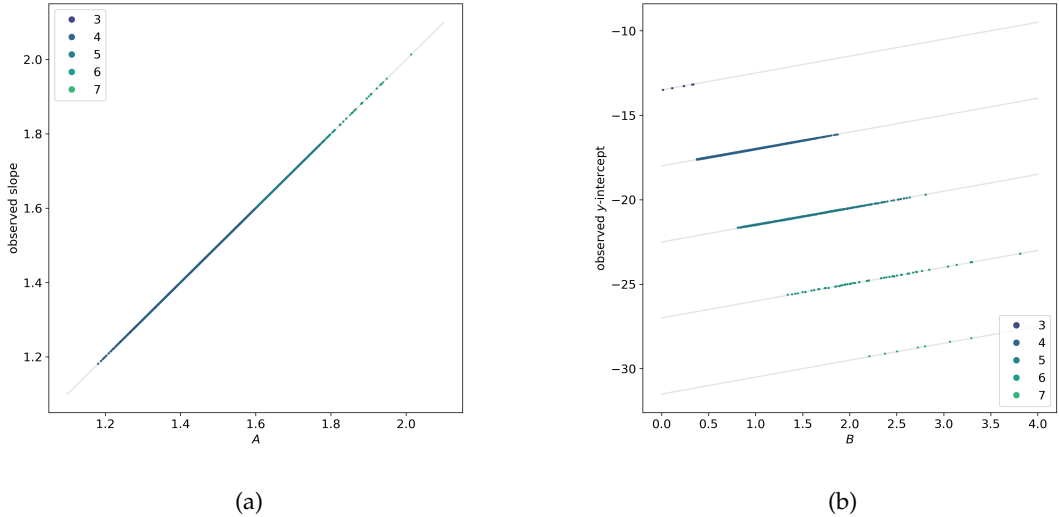

  \centering
  \begin{subfigure}{0.45\linewidth}
    \includegraphics[width=\linewidth]{slope_comparison.png}
    \caption{}
    \label{fig:slope_comparison}
  \end{subfigure}
  \qquad
  \begin{subfigure}{0.45\linewidth}
    \includegraphics[width=\linewidth]{intercept_comparison.png}
    \caption{}
    \label{fig:intercept_comparison}
  \end{subfigure}
  \caption{For weighted projective spaces, the asymptotic coefficients $A$ and $B$ are closely related to the slope and $y$-intercept. (a)~Comparison between~$A$ and the slope from the linear model, for weighted projective spaces that occur in both Figure~\ref{fig:colour_triangle_all_pr1} and Figure~\ref{fig:hedgehogs}, coloured by dimension. The line $\text{slope} = A$ is indicated. (b)~Comparison between~$B$ and the $y$-intercept from the linear model, for weighted projective spaces that occur in both Figure~\ref{fig:colour_triangle_all_pr1} and Figure~\ref{fig:hedgehogs}, coloured by dimension. In each case the line $\text{$y$-intercept} = B - \frac{9}{2} \dim{X}$ is shown.}
  \label{fig:comparison}
\end{figure*}

The asymptotics in Theorems~\ref{thm:wps} and~\ref{thm:rank_2} imply that, for~$X$ a weighted projective space or toric variety of Picard rank two, the quantum period determines the dimension of~$X$. Let us revisit the clustering analysis from this perspective. Recall the asymptotic expression~$\log c_d\sim A d - \frac{\dim{X}}{2} \log d + B$ and the formulae for~$A$ and~$B$ from Theorem~\ref{thm:wps}. Figure~\ref{fig:hedgehogs} shows the values of~$A$ and~$B$ for a sample of weighted projective spaces, coloured by dimension. Note the clusters, which overlap. Broadly speaking, the values of~$B$ increase as the dimension of the weighted projective space increases, whereas in Figure~\ref{fig:colour_triangle_all_pr1} the $y$\nobreakdash-intercepts decrease as the dimension increases. This reflects the fact that we fitted a linear model to~$\log c_d$, omitting the subleading~$\log d$ term in the asymptotics. As Figure~\ref{fig:comparison} shows, the linear model assigns the omitted term to the $y$-intercept rather than the slope. The slope of the linear model is approximately equal to $A$. The $y$-intercept, however, differs from $B$ by a dimension-dependent factor.
The omitted~$\log$ term does not vary too much over the range of degrees ($d<100\,000$) that we considered, and has the effect of reducing the observed $y$\nobreakdash-intercept from~$B$ to approximately~$B - \frac{9}{2} \dim X$, distorting the clusters slightly and translating them downwards by a dimension\nobreakdash-dependent factor. This separates the clusters. We expect that the same mechanism applies in Picard rank two as well: see Figure~\ref{fig:hedgehogs_pr2}.

We can show that each cluster in Figure~\ref{fig:hedgehogs} is linearly bounded using constrained optimisation techniques. Consider for example the cluster for weighted projective spaces of dimension five, as in Figure~\ref{fig:bounded_hedgehog}.

\begin{Prop}\label{pro:bound_5}
  Let~$X$ be the five\nobreakdash-dimensional weighted projective space~$\PP(a_1,\ldots,a_6)$, and let~$A$,~$B$ be as in Theorem~\ref{thm:wps}. Then~$B+\frac{5}{2}A \geq \frac{41}{8}$. If in addition~$a_i \leq 25$ for all~$i$ then~$B + 5 A \leq \frac{41}{40}$.
\end{Prop}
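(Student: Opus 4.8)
The plan is to recognise both inequalities as constrained optimisation problems on the probability simplex and to reduce them, via the convexity structure of a single auxiliary function, to a short list of one‑variable extremal problems. Write $p=(p_1,\dots,p_6)$ with $p_i=a_i/a$, so that $p$ lies in the open simplex $\Delta=\{p_i>0,\ \sum_i p_i=1\}$. The formulae of Theorem~\ref{thm:wps} (with $\dim X=5$) give, for any $\lambda>0$,
\[
B+\lambda A \;=\; -\tfrac52\log(2\pi)\;+\;\sum_{i=1}^{6} g_\lambda(p_i),\qquad g_\lambda(t):=-\bigl(\tfrac12+\lambda t\bigr)\log t .
\]
A direct computation gives $g_\lambda''(t)=(1-2\lambda t)/(2t^2)$, so $g_\lambda$ is strictly convex on $(0,\tfrac1{2\lambda})$ and strictly concave on $(\tfrac1{2\lambda},1)$; moreover $g_\lambda(t)\to+\infty$ as $t\to0^+$ and $g_\lambda(1)=0$. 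The first bound is the case $\lambda=\tfrac52$ (convexity threshold $\tfrac15$) and the second the case $\lambda=5$ (threshold $\tfrac1{10}$). The whole difficulty is that $g_\lambda$ is neither convex nor concave, so one cannot finish with a single use of Jensen's inequality — and, as I explain at the end, a naive Jensen bound is in any case not sharp enough.

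For the lower bound $B+\tfrac52A\ge\tfrac{41}8$ I would proceed as follows. Since $\sum_i g_{5/2}(p_i)\to+\infty$ on $\partial\Delta$, the infimum is attained on a compact sublevel set, so it suffices to bound $\sum_i g_{5/2}(p_i)$ from below on all of $\Delta$. Let $k\in\{0,1,2,3,4\}$ be the number of coordinates exceeding $\tfrac15$ (one cannot have $k=5$, since $5$ coordinates each exceeding $\tfrac15$ already sum to more than $1$). If $k=0$, all coordinates lie in $(0,\tfrac15]$ where $g_{5/2}$ is convex, and Jensen gives $\sum_i g_{5/2}(p_i)\ge 6\,g_{5/2}(\tfrac16)=\tfrac{11}2\log 6$. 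If $k\ge1$, let $\sigma$ be the mass carried by the $k$ large coordinates; apply Jensen to the $6-k$ small coordinates (whose common average $\tfrac{1-\sigma}{6-k}$ is automatically $\le\tfrac15$), and use concavity of $g_{5/2}$ on $[\tfrac15,1]$: the feasible set $\{x\in[\tfrac15,1)^k:\sum_i x_i=\sigma\}$ is a simplex whose vertices are the permutations of $(\tfrac15,\dots,\tfrac15,\sigma-\tfrac{k-1}5)$, and a concave function attains its minimum at a vertex. This yields $\sum_i g_{5/2}(p_i)\ge\Theta_k(\sigma)$, where
\[
\Theta_k(\sigma)=(k-1)\log 5+g_{5/2}(\sigma-\tfrac{k-1}5)+(6-k)\,g_{5/2}(\tfrac{1-\sigma}{6-k}),
\]
and it remains to minimise each of the four functions $\Theta_k$ on $(k/5,1)$. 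Since $\Theta_k$ is analytic, finite at $\sigma=k/5$, and tends to $+\infty$ as $\sigma\to1$, this reduces to locating its (finitely many, in practice one or two) critical points and comparing the values of $\Theta_k$ there and at $\sigma=k/5$; one checks in every case that $\min\Theta_k>\tfrac{41}8+\tfrac52\log(2\pi)$, which together with $\tfrac{11}2\log 6>\tfrac{41}8+\tfrac52\log(2\pi)$ for $k=0$ proves $B+\tfrac52A\ge\tfrac{41}8$. Note that no integrality or terminality of the weights is used, consistent with the proposition.

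For the upper bound the extra hypothesis is what makes the problem well posed: $a_i\le25$ for all $i$ forces $\max_i p_i\le 25\min_i p_i$, hence $1=\sum_i p_i\le 6\cdot25\min_i p_i$ and so $p_i\ge\tfrac1{150}$ for every $i$; the feasible region is therefore a compact subset of $\Delta$ bounded away from the singular boundary (without such a bound $\sum_i g_5(p_i)$ is unbounded above, so the hypothesis is essential). One then maximises $\sum_i g_5(p_i)$ over this region. If $\min_i p_i\ge\tfrac1{10}$, all coordinates lie where $g_5$ is concave and Jensen bounds the sum above by $6\,g_5(\overline p)$. If $\min_i p_i<\tfrac1{10}$, set $t=\min_i p_i$; all coordinates then lie in $[t,25t]$, and one replaces $g_5$ on this interval by its least concave majorant (which agrees with $g_5$ on the concave part and is linear on the convex part), reducing to a one‑parameter family of problems in $t$ together with the combinatorial choice of how many coordinates sit at $t$ versus at $25t$ — that is, again to a finite list of one‑variable optimisations. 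In each regime the resulting bound sits just below the value claimed in the proposition.

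The step I expect to be the main obstacle is this final, one‑variable part of the argument, not the reductions. The reason is that the extrema being controlled are close to the claimed constants: for the lower bound the true infimum of $\sum_i g_{5/2}(p_i)$ over $\Delta$ exceeds $\tfrac{41}8+\tfrac52\log(2\pi)$ only by about $0.13$, and in the upper‑bound case the gap is of a comparably small order. Consequently the crude estimates — Jensen at the barycentre, or replacing $g_\lambda$ by a global convex/concave envelope — are provably not good enough, and one is forced to locate the critical points of each reduced function $\Theta_k$ (and of its analogues in the upper‑bound case) essentially exactly, being careful that some of these functions have two interior critical points rather than one. Making these one‑variable comparisons rigorous, using explicit rational bounds on $\log 5$, $\log 6$ and $\log(2\pi)$, is where the real work lies; the convexity bookkeeping above is what turns it into a finite check.
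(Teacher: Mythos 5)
Your overall strategy coincides with the paper's: both inequalities are set up as constrained optimisations of $B+\theta A$ over the probability simplex, with the extra constraint $p_i\ge\epsilon$ making the maximisation well posed. The paper records essentially nothing beyond that framing, so your analysis of $g_\lambda(t)=-(\tfrac12+\lambda t)\log t$ -- the inflection point at $t=\tfrac1{2\lambda}$, Jensen on the convex block, the vertex argument for the concave block, and the reduction to the one-variable functions $\Theta_k$ -- is a genuine, correct filling-in of the lower-bound half. The numerics bear you out there: the infimum of $\sum_i g_{5/2}(p_i)$ on the simplex is about $9.848$ (attained near a configuration with one coordinate $\approx 0.51$ and five equal), while $\tfrac{41}{8}+\tfrac52\log(2\pi)\approx 9.720$, so your $\Theta_k$ reduction closes the first inequality with the margin of roughly $0.13$ that you predict, and your observation that a single global Jensen step is not sharp enough is accurate.

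The genuine gap is in the upper-bound half, and it is one your own framework should have exposed. Since $A=-\sum_i p_i\log p_i$ is an entropy, $A\ge 0$, so the first inequality already forces $B+5A\ge B+\tfrac52 A\ge\tfrac{41}{8}=5.125$ for \emph{every} five-dimensional weighted projective space; the claimed bound $B+5A\le\tfrac{41}{40}=1.025$ is therefore violated at every feasible point. Concretely, $\PP^5=\PP(1,\ldots,1)$ satisfies $a_i\le 25$ and has $A=\log 6$, $B=3\log 6-\tfrac52\log(2\pi)$, hence $B+5A\approx 9.74$, and the actual maximum of $B+5A$ subject to $p_i\ge\tfrac1{150}$ is about $10.5$ (three coordinates at the lower bound). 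The statement as printed is internally inconsistent -- evidently the constant $\tfrac{41}{40}$ is garbled, and the paper's sketch does not record how it was obtained -- but your closing assertion that in each regime the resulting bound sits just below the value claimed, and your remark that the gap in the upper-bound case is of comparably small order to the lower-bound one, are simply false: no refinement of the one-variable optimisations can establish an inequality whose two sides differ by about $9$ in the wrong direction. The correct outcome of your method here is to compute the true maximum (roughly $10.4$--$10.6$, depending on whether one takes $\epsilon=\tfrac1{150}$ from $a_i\le 25$ alone or the paper's $\epsilon=\tfrac1{124}$ from terminality) and to report that the stated constant must be corrected, rather than to claim the check succeeds.
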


\noindent Fix a suitable $\theta \geq 0$ and consider 
\[
  B + \theta A = - \frac{\dim{X}}{2} \log(2 \pi) - \frac{1}{2} \sum_{i=1}^N \log p_i - \theta \sum_{i=1}^N p_i \log p_i
\]
with~$\dim{X}=N-1=5$. Solving
\[
\begin{aligned}
    \min(B + \theta A) && \text{subject to} &&& p_1 + \cdots + p_6 = 1 \\
    && &&& p_1,\ldots,p_6 \geq 0
\end{aligned}
\]
on the five\nobreakdash-simplex gives a linear lower bound for the cluster. This bound does not use terminality: it applies to any weighted projective space of dimension five. The expression $B + \theta A$ is unbounded above on the five\nobreakdash-simplex (because $B$ is) so we cannot obtain an upper bound this way. Instead, consider  
\[
\begin{aligned}
    \max(B + \theta A)&& \text{subject to} &&& p_1 + \cdots + p_6 = 1 \\
    && &&& \epsilon \leq p_1 \leq p_2 \leq \cdots \leq p_6
\end{aligned}
\]
for an appropriate small positive $\epsilon$, which we can take to be $1/a$ where $a$ is the maximum sum of the weights. For Figure~\ref{fig:bounded_hedgehog}, for example, we can take $a = 124$, and in general such an $a$ exists because there are only finitely many terminal weighted projective spaces. This gives a linear upper bound for the cluster. 

\begin{figure}[tbp]
    \centering
    \includegraphics[width=0.4\textwidth]{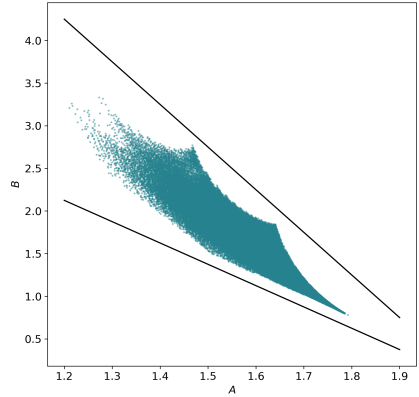}
    \caption{Linear bounds for the cluster of five-dimensional weighted projective spaces in Figure~\ref{fig:hedgehogs}. The bounds are given by Proposition~\ref{pro:bound_5}.}
    \label{fig:bounded_hedgehog}
\end{figure}

The same methods yield linear bounds on each of the clusters in Figure~\ref{fig:hedgehogs}. As the Figure shows however, the clusters are not linearly separable.
\subsection*{Discussion}
We developed machine learning models that predict, with high accuracy, the dimension of a Fano variety from its regularized quantum period. These models apply to weighted projective spaces and toric varieties of Picard rank two with terminal quotient singularities. We then established rigorous asymptotics for the regularized quantum period of these Fano varieties. The form of the asymptotics implies that, in these cases, the regularized quantum period of a Fano variety~$X$ determines the dimension of~$X$. The asymptotics also give a theoretical underpinning for the success of the machine learning models. 

Perversely, because the series involved converge extremely slowly, reading the dimension of a Fano variety directly from the asymptotics of the regularized quantum period is not practical. For the same reason, enhancing the feature set of our machine learning models by including a~$\log d$ term in the linear regression results in less accurate predictions. So although the asymptotics in Theorems~\ref{thm:wps} and~\ref{thm:rank_2} determine the dimension in theory, in practice the most effective way to determine the dimension of an unknown Fano variety from its quantum period is to apply a machine learning model. 

The insights gained from machine learning were the key to our formulation of the rigorous results in Theorems~\ref{thm:wps} and~\ref{thm:rank_2}. Indeed, it might be hard to discover these results without a machine learning approach. It is notable that the techniques in the proof of Theorem~\ref{thm:rank_2} -- the identification of generating functions for Gromov--Witten invariants of toric varieties with certain hypergeometric functions -- have been known since the late 1990s and have been studied by many experts in hypergeometric functions since then. For us, the essential step in the discovery of the results was 
the feature extraction that we performed as part of our ML pipeline.

This work demonstrates that machine learning can uncover previously unknown structure in complex mathematical data, and is a powerful tool for developing rigorous mathematical results; cf.~\cite{DaviesEtAl2021}. It also provides evidence for a fundamental conjecture in the Fano classification program~\cite{CoatesCortiGalkinGolyshevKasprzyk2013}: that the regularized quantum period of a Fano variety determines that variety. 
\section{Methods}\label{sec:methods}
In this section we prove Theorem~\ref{thm:wps} and Theorem~\ref{thm:rank_2}. The following result implies Theorem~\ref{thm:wps}.

\begin{Thm} \label{thm:wps_methods}
    Let $X$ denote the weighted projective space $\PP(a_1,\ldots,a_N)$, so that the dimension of $X$ is $N-1$. Let $c_d$ denote the coefficient of $t^d$ in the regularized quantum period $\Ghat_X(t)$ given in~\eqref{eq:Ghat_wP}. Let $a = a_1 + \ldots + a_N$. Then $c_d = 0$ unless $d$ is divisible by $a$, and
    \[
        \log c_{ka} \sim ka \left[\log a - \frac{1}{a} \sum_{i=1}^N a_i \log a_i \right] -\frac{\dim{X}}{2} \log(ka) + \frac{1+\dim{X}}{2} \log a - \frac{\dim{X}}{2} \log(2 \pi) - \frac{1}{2} \sum_{i=1}^N \log a_i
    \] 
    That is, non-zero coefficients $c_d$ satisfy
    \[
        \log c_d \sim Ad - \frac{\dim{X}}{2} \log d + B
    \]
    as $d \to \infty$, where
    \[
    \begin{aligned}
      A = - \sum_{i=1}^N p_i \log p_i &&
      B = - \frac{\dim{X}}{2} \log(2 \pi) - \frac{1}{2} \sum_{i=1}^N \log p_i
    \end{aligned}
    \]
    and $p_i = a_i/a$.
\end{Thm}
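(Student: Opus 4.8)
The plan is to apply Stirling's formula termwise to the explicit expression
\[
c_{ka} = \frac{(ak)!}{(a_1 k)!\,(a_2 k)! \cdots (a_N k)!},
\]
read off from~\eqref{eq:Ghat_wP}; the same formula shows $c_d = 0$ unless $a \mid d$, since only exponents of the form $t^{ak}$ occur. Because every $a_i$ is a positive integer, each of $ak,\,a_1 k,\,\ldots,\,a_N k$ tends to infinity with $k$, so Stirling's formula $\log n! = n\log n - n + \tfrac12\log(2\pi n) + O(1/n)$ applies to all $N+1$ factorials, and the accumulated error from the $N+1$ applications is $O(1/k) = o(1)$. Hence all the asymptotic relations below will in fact hold as equalities up to a vanishing error term.

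Next I would substitute and collect terms by order of growth in $k$. The terms linear in $k$ cancel, since $-ak + \sum_i a_i k = 0$. The $k\log k$ contributions also cancel against one another, because $ak\log(ak) - \sum_i a_i k\log(a_i k) = ak\log a - k\sum_i a_i\log a_i = ka\bigl[\log a - \tfrac1a\sum_i a_i\log a_i\bigr]$, which is the leading term. The half-integer multiples of logarithms collect to $\tfrac{1-N}{2}\log k$ together with the constant $\tfrac{1-N}{2}\log(2\pi) + \tfrac12\log a - \tfrac12\sum_i\log a_i$; since $\dim X = N-1$, the coefficient of $\log k$ is exactly $-\tfrac{\dim X}{2}$. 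Writing $\log k = \log(ka) - \log a$ moves a further $\tfrac{\dim X}{2}\log a$ into the constant and produces the first displayed asymptotic in the statement, namely
\[
\log c_{ka} \sim ka\Bigl[\log a - \tfrac1a{\textstyle\sum_i} a_i\log a_i\Bigr] - \tfrac{\dim X}{2}\log(ka) + \tfrac{1+\dim X}{2}\log a - \tfrac{\dim X}{2}\log(2\pi) - \tfrac12{\textstyle\sum_i}\log a_i.
\]

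Finally I would rewrite everything in terms of $p_i = a_i/a$ and $d = ka$. A one-line computation gives $-\sum_i p_i\log p_i = \log a - \tfrac1a\sum_i a_i\log a_i = A$, so the leading term is $Ad$; the subleading term is $-\tfrac{\dim X}{2}\log d$ unchanged; and the identity $-\tfrac12\sum_i\log p_i = -\tfrac12\sum_i\log a_i + \tfrac{\dim X+1}{2}\log a$ reconciles the two forms of the constant, yielding $B = -\tfrac{\dim X}{2}\log(2\pi) - \tfrac12\sum_i\log p_i$. I expect no genuine obstacle here: the argument is entirely Stirling's formula plus bookkeeping. The only points requiring care are (i) checking that the $O(1/n)$ errors from the $N+1$ Stirling estimates really do sum to $o(1)$, so that the displayed relations are equalities up to a vanishing error and hence, a fortiori, asymptotic equivalences once the leading coefficient $A$ is positive (which holds whenever $N \geq 2$, the case $N=1$ being trivial), and (ii) the algebraic matching of the constant term via the $p_i$-to-$a_i$ conversion just described.
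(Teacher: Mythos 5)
Your proposal is correct and is essentially the paper's own proof: the paper's argument is the one-line instruction to combine Stirling's formula with the closed formula~\eqref{eq:Ghat_wP} for $c_{ka}$, and your write-up simply carries out that computation in full, with the bookkeeping (cancellation of the linear and $k\log k$ terms, the $\log k$ coefficient, and the $a_i$-to-$p_i$ conversion of the constant) all checking out.
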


\begin{proof}
    Combine Stirling's formula
    \[
        n! \sim \sqrt{2 \pi n } \left(\frac{n}{e} \right)^n
    \]
    with the closed formula~\eqref{eq:Ghat_wP} for $c_{ka}$.
\end{proof}
\subsubsection*{Toric varieties of Picard rank 2}
Consider a toric variety $X$ of Picard rank two and dimension~$N-2$ with weight matrix
\[
  \begin{pmatrix}
    a_1 & a_2 & a_3 & \cdots & a_N \\
    b_1 & b_2 & b_3 & \cdots & b_N
  \end{pmatrix}
\]
as in~\eqref{eq:weights}. Let us move to more invariant notation, writing $\alpha_i$ for the linear form on $\RR^2$ defined by the transpose of the $i$th column of the weight matrix, and $\alpha = \alpha_1 + \cdots + \alpha_N$. Equation~\ref{eq:Ghat_rank_two} becomes 
\[
  \Ghat_X(t) = \sum_{k \in \ZZ^2 \cap C} \frac{(\alpha\cdot k)!}{\prod_{i=1}^N (\alpha_i\cdot k)!} t^{\alpha \cdot k}
\]
where $C$ is the cone $C = \{ x \in \RR^2\mid\text{$\alpha_i \cdot x \geq 0$ for $i =1,2,\ldots,N$} \}$. As we will see, for $d \gg 0$ the coefficients
\[
\begin{aligned}
  \frac{(\alpha\cdot k)!}{\prod_{i=1}^N (\alpha_i\cdot k)!}
  && \text{where $k \in \ZZ^2 \cap C$ and $\alpha \cdot k = d$}
\end{aligned}
\]
are approximated by a rescaled Gaussian. We begin by finding the mean of that Gaussian, that is, by minimising
\[
\begin{aligned}
  \prod_{i=1}^N (\alpha_i\cdot k)!
  && \text{where $k \in \ZZ^2 \cap C$ and $\alpha \cdot k = d$.}
\end{aligned}
\]
For $k$ in the strict interior of $C$ with $\alpha \cdot k = d$, we have that
\[
  (\alpha_i \cdot k)! \sim \left( \frac{\alpha_i \cdot k}{e}\right)^{\alpha_i \cdot k}
\]
as $d \to \infty$.

\begin{Prop}
  \label{pro:minimum}
  The constrained optimisation problem
  \[
  \begin{aligned}
    \min & \prod_{i=1}^N (\alpha_i\cdot x)^{\alpha_i \cdot x} &
    \text{subject to\ } & 
    \begin{cases}
      x \in C \\
      \alpha \cdot x = d
    \end{cases}
  \end{aligned}
  \]  
  has a unique solution $x = x^*$. Furthermore, setting $p_i = (\alpha_i \cdot x^*)/(\alpha \cdot x^*)$ we have that the monomial
  \[    
    \prod_{i=1}^N p_i^{\alpha_i \cdot k}
  \]
  depends on $k \in \ZZ^2$ only via $\alpha \cdot k$.
\end{Prop}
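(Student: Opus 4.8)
The plan is to take logarithms, recognise the problem as a strictly convex minimisation over a compact segment, and then observe that the asserted monomial identity is precisely the first‑order optimality condition in disguise.

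First I would reduce to analysing $f(x) = \sum_{i=1}^N (\alpha_i\cdot x)\log(\alpha_i\cdot x)$, extended continuously to $\partial C$ by $0\log 0 = 0$; minimising $\prod_i (\alpha_i\cdot x)^{\alpha_i\cdot x}$ over $F := C\cap\{\alpha\cdot x = d\}$ is the same as minimising $f$ over $F$. The feasible set $F$ is compact: $C$ is the cone dual to the cone generated by $\alpha_1,\dots,\alpha_N$, which is two‑dimensional and contains $\alpha = \alpha_1+\dots+\alpha_N$ in its interior (the non‑degeneracy hypothesis that $\alpha\notin\RR\,\alpha_i$ for any $i$ forces the $\alpha_i$ not all to be proportional), so the line $\{\alpha\cdot x = d\}$ meets the pointed cone $C$ in a bounded segment. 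On $C$ the function $f$ is convex, being a sum of the convex functions $t\mapsto t\log t$ precomposed with the linear forms $\alpha_i$; restricted to the line $\{\alpha\cdot x = d\}$ each summand is \emph{strictly} convex, because $\alpha_i\cdot x$ is a non‑constant affine function of the line parameter — again by the hypothesis $\alpha\notin\RR\,\alpha_i$. Hence $f$ attains its minimum on $F$ at a unique point $x^*$, which settles the first assertion.

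The next step is to show that $x^*$ lies in the interior of $C$, so that the inequalities cutting out $C$ are inactive. At an endpoint of the segment $F$ some $\alpha_j\cdot x$ vanishes; moving into $C$ along $F$, the $j$‑th summand of $f$ contributes to the directional derivative a term $(\alpha_j\cdot\dot x)\bigl(\log(\alpha_j\cdot x)+1\bigr)$ with $\alpha_j\cdot\dot x > 0$ and $\log(\alpha_j\cdot x)\to-\infty$, so the derivative tends to $-\infty$ and $f$ strictly decreases away from $\partial C$. Thus $x^*$ is interior, all $\alpha_i\cdot x^* > 0$, and there is a Lagrange multiplier $\lambda$ with $\nabla f(x^*) = \lambda\,\alpha$. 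A short computation gives $\nabla f(x) = \sum_{i=1}^N \bigl(\log(\alpha_i\cdot x)+1\bigr)\alpha_i = \alpha + \sum_{i=1}^N \log(\alpha_i\cdot x)\,\alpha_i$, so the optimality condition reads
\[
  \sum_{i=1}^N \log(\alpha_i\cdot x^*)\,\alpha_i = (\lambda-1)\,\alpha .
\]

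Finally I would deduce the monomial property. Since $p_i = (\alpha_i\cdot x^*)/d$ we have $\log p_i = \log(\alpha_i\cdot x^*) - \log d$, hence $\sum_{i=1}^N (\log p_i)\,\alpha_i = (\lambda - 1 - \log d)\,\alpha$ is a scalar multiple of $\alpha$; writing $\kappa = \lambda - 1 - \log d$, for every $k\in\ZZ^2$ we get
\[
  \log\prod_{i=1}^N p_i^{\alpha_i\cdot k} = \sum_{i=1}^N (\alpha_i\cdot k)\log p_i = \Bigl(\sum_{i=1}^N (\log p_i)\,\alpha_i\Bigr)\!\cdot k = \kappa\,(\alpha\cdot k),
\]
which depends on $k$ only through $\alpha\cdot k$, as claimed. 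The one genuinely non‑routine point is the interiority of $x^*$, where one uses the blow‑up of the derivative of $t\log t$ at $0$; the convexity and compactness statements are standard, and the real content is the observation that the monomial identity is nothing but the stationarity condition $\nabla f(x^*)\in\RR\,\alpha$ rewritten.
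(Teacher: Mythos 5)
Your proposal is correct and follows essentially the same route as the paper's proof: pass to the logarithm $\sum_i(\alpha_i\cdot x)\log(\alpha_i\cdot x)$, use convexity and the blow-up of the derivative of $t\log t$ at $0$ to get a unique interior minimiser, and read off the monomial identity from the Lagrange stationarity condition $\sum_i\log(\alpha_i\cdot x^*)\,\alpha_i\in\RR\,\alpha$. You supply more detail than the paper on the compactness of the feasible segment, the strict convexity along the line, and the interiority of $x^*$ (all of which the paper asserts more briefly via the pullback of $\sum_i x_i\log x_i$ from the simplex in $\RR^N$), but the substance is identical.
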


\begin{proof}
  Taking logarithms gives the equivalent problem
  \begin{align}
    \label{eq:min_problem}
    \min & \sum_{i=1}^N (\alpha_i \cdot x) \log (\alpha_i \cdot x) &
    \text{subject to\ } & 
    \begin{cases}
      x \in C \\
      \alpha \cdot x = d
    \end{cases}
  \end{align}  
  The objective function $\sum_{i=1}^N (\alpha_i \cdot x) \log (\alpha_i \cdot x)$ here is the pullback to $\RR^2$ of the function
  \[
    f(x_1,\ldots,x_N) = \sum_{i=1}^N x_i \log x_i
  \]
  along the linear embedding $\varphi : \RR^2 \to \RR^N$ given by $(\alpha_1,\ldots,\alpha_N)$. Note that $C$ is the preimage under $\varphi$ of the positive orthant~$\RR^N_+$, so we need to minimise $f$ on the intersection of the simplex $x_1 + \cdots + x_N = d$, $(x_1,\ldots,x_N) \in \RR^N_+$ with the image of $\varphi$. The function $f$ is convex and decreases as we move away from the boundary of the simplex, so the minimisation problem in~\eqref{eq:min_problem} has a unique solution $x^*$ and this lies in the strict interior of $C$. We can therefore find the minimum $x^*$ using the method of Lagrange multipliers, by solving
  \begin{equation}
    \label{eq:Lagrange_multiplier}
    \sum_{i=1}^N \alpha_i \log (\alpha_i \cdot x) + \alpha
    =
    \lambda \alpha
  \end{equation}
  for $\lambda \in \RR$ and $x$ in the interior of $C$ with $\alpha \cdot x = d$. Thus
  \[
    \sum_{i=1}^N \alpha_i \log (\alpha_i \cdot x^*)
    =
    (\lambda-1) \alpha
  \]
  and, evaluating on $k \in \ZZ^2$ and exponentiating, we see that
  \[
    \prod_{i=1}^N (\alpha_i \cdot x^*)^{\alpha_i \cdot k}
  \]
  depends only on $\alpha \cdot k$. The result follows.
\end{proof}

Given a solution $x^*$ to~\eqref{eq:Lagrange_multiplier}, any positive scalar multiple of $x^*$ also satisfies~\eqref{eq:Lagrange_multiplier}, with a different value of $\lambda$ and a different value of $d$. Thus the solutions $x^*$, as $d$ varies, lie on a half-line through the origin. The direction vector $[\mu:\nu] \in \PP^1$ of this half-line is the unique solution to the system
\begin{equation}
  \label{eq:polynomial_minimum}
  \begin{aligned}
    \prod_{i=1}^N (a_i \mu  +b_i \nu)^{a_i b} &=
    \prod_{i=1}^N (a_i \mu + b_i \nu)^{b_i a}
    \\
    \begin{pmatrix} \mu \\ \nu \end{pmatrix} &\in C
  \end{aligned}
\end{equation}
Note that the first equation here is homogeneous in $\mu$ and $\nu$; it is equivalent to~\eqref{eq:Lagrange_multiplier}, by exponentiating and then eliminating~$\lambda$. Any two solutions $x^*$, for different values of $d$, differ by rescaling, and the quantities $p_i$ in Proposition~\ref{pro:minimum} are invariant under this rescaling. They also satisfy $p_1 + \cdots + p_N = 1$.

We use the following result, known in the literature as the ``Local Theorem''~\cite{GnedenkoUshakov2018}, to approximate multinomial coefficients.

\begin{LocalThm}
    For $p_1, \dots, p_n \in [0,1]$ such that $p_1 + \cdots + p_n = 1$, the ratio 
    \[
        d^{\frac{n-1}{2}} \binom{d}{k_1 \cdots k_n} \prod_{i=1}^n p_i^{k_i} : \frac{\exp(-\frac{1}{2} \sum_{i=1}^n q_i x_i^2)}{(2 \pi)^{\frac{n-1}{2}} \sqrt{p_1 \cdots p_n}} \rightarrow 1
    \]
    as $d \rightarrow \infty$, uniformly in all $k_i$'s, where 
    \begin{align*}
        q_i = 1- p_i && 
        x_i = \frac{k_i - d p_i}{\sqrt{dp_iq_i}} 
    \end{align*}
    and the $x_i$ lie in bounded intervals. 
\end{LocalThm}

Let $B_r$ denote the ball of radius $r$ about $x^* \in \RR^2$. Fix $R > 0$. We apply the Local Theorem with $k_i = \alpha_i \cdot k$ and $p_i = (\alpha_i \cdot x^*)/(\alpha \cdot x^*)$, where $k \in \ZZ^2 \cap C$ satisfies $\alpha \cdot k = d$ and $k \in B_{R \sqrt{d}}$. Since
\[
  x_i = \frac{\alpha_i \cdot (k - x^*)}{\sqrt{d p_i q_i}}
\]
the assumption that $k \in B_{R \sqrt{d}}$ ensures that the $x_i$ remain bounded as $d \to \infty$.
Note that, by Proposition~\ref{pro:minimum}, the monomial $\prod_{i=1}^N p_i^{k_i}$ depends on $k$ only via $\alpha \cdot k$, and hence here is independent of $k$:
\[
  \prod_{i=1}^N p_i^{k_i} = \prod_{i=1}^N p_i^{\alpha_i \cdot x^*} = \prod_{i=1}^N p_i^{d p_i}
\]
Furthermore 
\[
  \sum_{i=1}^N q_i x_i^2 = \frac{(k-x^*)^T A \, (k - x^*)}{d}
\]
where $A$ is the positive-definite $2 \times 2$ matrix given by
\[
  A = \sum_{i=1}^N \frac{1}{p_i} \alpha_i^T \alpha_i
\]
Thus as $d \to \infty$, the ratio
\begin{equation} \label{eq:limit_of_multinomial}
  \frac{(\alpha\cdot k)!}{\prod_{i=1}^N (\alpha_i\cdot k)!}
  :
  \frac{\exp\big({-\frac{1}{2d}} (k - x^*)^T A \, (k-x^*)\big)}{(2 \pi d)^{\frac{N-1}{2}} \prod_{i=1}^N p_i^{d p_i + \frac{1}{2}}}
  \to 1
\end{equation} 
for all $k \in \ZZ^2 \cap C \cap B_{R\sqrt{d}}$ such that $\alpha \cdot k = d$.

\bigskip
\noindent\textbf{Theorem~\ref{thm:rank_2}.}
\emph{
  Let $X$ be a toric variety of Picard rank two and dimension~$N-2$ with weight matrix
  \[
    \begin{pmatrix}
      a_1 & a_2 & a_3 & \cdots & a_N \\
      b_1 & b_2 & b_3 & \cdots & b_N
    \end{pmatrix}
  \]
  Let $a = a_1 + \cdots + a_N$ and $b = b_1 + \cdots + b_N$, let $\ell = \gcd(a,b)$, and let $[\mu:\nu] \in \PP^1$ be the unique solution to~\eqref{eq:polynomial_minimum}. Let $c_d$ denote the coefficient of $t^d$ in the regularized quantum period $\Ghat_X(t)$.  Then non-zero coefficients $c_d$ satisfy
  \[
      \log c_d \sim Ad - \frac{\dim{X}}{2} \log d + B
  \]
  as $d \to \infty$, where 
  \begin{align*}
    A &= -\sum_{i=1}^N p_i \log p_i \\
    B &= - \frac{\dim{X}}{2} \log(2 \pi) - \frac{1}{2} \sum_{i=1}^N \log p_i - \frac{1}{2} \log \left( \sum_{i=1}^N \frac{(a_i b - b_i a)^2}{ \ell^2 p_i} \right) 
  \end{align*}
  and $p_i = \displaystyle \frac{\mu a_i + \nu b_i}{\mu a + \nu b}$.
}

\begin{proof}
  We need to estimate
  \[
    c_d = \sum_{\substack{k \in \ZZ^2 \cap C \\ \text{with $\alpha \cdot k = d$}}} \frac{(\alpha\cdot k)!}{\prod_{i=1}^N (\alpha_i\cdot k)!} 
  \]
  Consider first the summands with $k \in \ZZ^2 \cap C$ such that $\alpha \cdot k = d$ and $k \not \in B_{R \sqrt{d}}$. For $d$ sufficiently large, each such summand is bounded by $c d^{-\frac{1+\dim{X}}{2}}$ for some constant $c$ -- see~\eqref{eq:limit_of_multinomial}. Since the number of such summands grows linearly with $d$, in the limit $d \to \infty$ the contribution to $c_d$ from $k \not \in B_{R \sqrt{d}}$ vanishes.

  As $d \to \infty$, therefore
  \[
    c_d \sim
    \frac{1}{(2 \pi d)^{\frac{N-1}{2}} \prod_{i=1}^N p_i^{d p_i + \frac{1}{2}}}
    \sum_{\substack{k \in \ZZ^2 \cap C \cap B_{R \sqrt{d}}\\ \text{with $\alpha \cdot k = d$}}} \exp\left(-\frac{(k - x^*)^T A \, (k-x^*)}{2d}\right)
  \]
  Writing $y_k = (k - x^*)/\sqrt{d}$, considering the sum here as a Riemann sum, and letting $R \to \infty$, we see that
  \[
  \begin{aligned}
     c_d \sim
    \frac{1}{(2 \pi d)^{\frac{N-1}{2}} \prod_{i=1}^N p_i^{d p_i+ \frac{1}{2}}}
    \sqrt{d}
    \int_{L_\alpha} \exp \big( {-\textstyle \frac{1}{2}}y^T A y \big) \, dy
  \end{aligned}
  \]
  where $L_\alpha$ is the line through the origin given by $\ker \alpha$ and $dy$ is the measure on $L_\alpha$ given by the integer lattice $\ZZ^2 \cap L_\alpha \subset L_\alpha$.

  To evaluate the integral, let 
  \[
  \begin{aligned}
    \alpha^\perp = \frac{1}{\ell}\begin{pmatrix} b\\-a \end{pmatrix}
    && \text{where $\ell = \gcd\{a,b\}$}
  \end{aligned}
  \]
and observe that the pullback of $dy$ along the map $\RR \to L_\alpha$ given by $t \mapsto t \alpha^\perp$ is the standard measure on $\RR$. Thus
  \[
  \int_{L_\alpha} \exp \big( {-\textstyle \frac{1}{2}}y^T A y \big) \, dy = 
  \int_{-\infty}^\infty \exp \big( {-\textstyle \frac{1}{2}} \theta x^2 \big) \, dx =
  \sqrt{\frac{2 \pi}{\theta}}
  \]
  where $\theta = \sum_{i=1}^N \frac{1}{\ell^2 p_i} ( \alpha_i \cdot \alpha^\perp)^2$, and
  \[
  c_d \sim \frac{1}{(2 \pi d)^{\frac{\dim{X}}{2}} \prod_{i=1}^N p_i^{d p_i + \frac{1}{2}} \sqrt{\theta}}
  \]
  Taking logarithms gives the result.
\end{proof}
\appendix
\section{Supplementary Notes}\label{sec:supp_notes}
We begin with an introduction to weighted projective spaces and toric varieties, aimed at non-specialists.
\subsubsection*{Projective spaces and weighted projective spaces}
The fundamental example of a Fano variety is two\nobreakdash-dim\-ensional projective space~$\PP^2$. This is a quotient of~$\CC^3 \setminus \{0\}$ by the group~$\Cstar$, where the action of~$\lambda \in \Cstar$ identifies the points~$(x,y,z)$ and~$(\lambda x, \lambda y, \lambda z)$ in~$\CC^3\setminus \{0\}$. The variety~$\PP^2$ is smooth: we can see this by covering it with three open sets~$U_x$, $U_y$, $U_z$ that are each isomorphic to the plane~$\CC^2$:
\[
    \begin{aligned}
        U_x &= \{(1,Y,Z)\} & \text{given by rescaling $x$ to 1} \\
        U_y &= \{(X,1,Z)\} & \text{given by rescaling $y$ to 1} \\
        U_z &= \{(X,Y,1)\} & \text{given by rescaling $z$ to 1}
    \end{aligned}
\]
Here, for example, in the case~$U_x$ we take~$x \ne 0$ and set~$Y = y/x$, $Z = z/x$.

Although the projective space~$\PP^2$ is smooth, there are closely related Fano varieties called weighted projective spaces~\cite{Dolgachev1982,IanoFletcher2000} that have singularities. For example, consider the weighted projective plane~$\PP(1,2,3)$: this is the quotient of~$\CC^3 \setminus \{0\}$ by~$\Cstar$, where the action of~$\lambda \in \Cstar$ identifies the points~$(x,y,z)$ and~$(\lambda x, \lambda^2 y, \lambda^3 z)$. Let us write
\[ 
\mu_n = \{e^{2 \pi k \tti /n}\mid k \in \ZZ\}
\]
for the group of~$n$th roots of unity. The variety~$\PP(1,2,3)$ is once again covered by open sets
\[
    \begin{aligned}
        U_x &= \{(1,Y,Z)\} & \text{given by rescaling $x$ to 1} \\
        U_y &= \{(X,1,Z)\} & \text{given by rescaling $y$ to 1} \\
        U_z &= \{(X,Y,1)\} & \text{given by rescaling $z$ to 1}
    \end{aligned}
\]
but this time we have~$U_x \cong \CC^2$, $U_y \cong \CC^2/\mu_2$, and $U_z = \CC^2/\mu_3$. This is because, for example, when we choose~$\lambda \in \Cstar$ to rescale~$(x,y,z)$ with~$z \ne 0$ to~$(X,Y,1)$, there are three possible choices for~$\lambda$ and they differ by the action of~$\mu_3$. In particular this lets us see that~$\PP(1,2,3)$ is singular. For example, functions on the chart~$U_y \cong \CC^2/\mu_2$ are polynomials in~$X$ and~$Z$ that are invariant under~$X \mapsto -X$, $Z \mapsto -Z$, or in other words 
\[
\begin{aligned}
    U_y &= \Spec \CC[X^2, XZ, Z^2]\\
        &= \Spec \CC[a,b,c]/(ac-b^2)
\end{aligned}
\]
Thus the chart~$U_y$ is the solution set for the equation~$ac-b^2=0$, as pictured in Figure~\ref{fig:A1}. Similarly, the chart~$U_z \cong \CC^2/\mu_3$ can be written as 
\[
\begin{aligned}
    U_z &= \Spec \CC[X^3, XY, Y^3]\\
        &= \Spec \CC[a,b,c]/(ac-b^3)
\end{aligned}
\]
\noindent and is the solution set to the equation~$ac-b^3=0$, as pictured in Figure~\ref{fig:A2}. The variety~$\PP(1,2,3)$ has singular points at~$(0,1,0) \in U_y$ and~$(0,0,1) \in U_z$, and away from these points it is smooth.

\begin{figure}[tbp]
  \centering
  \begin{subfigure}{.2\textwidth}
    \includegraphics[width=\linewidth]{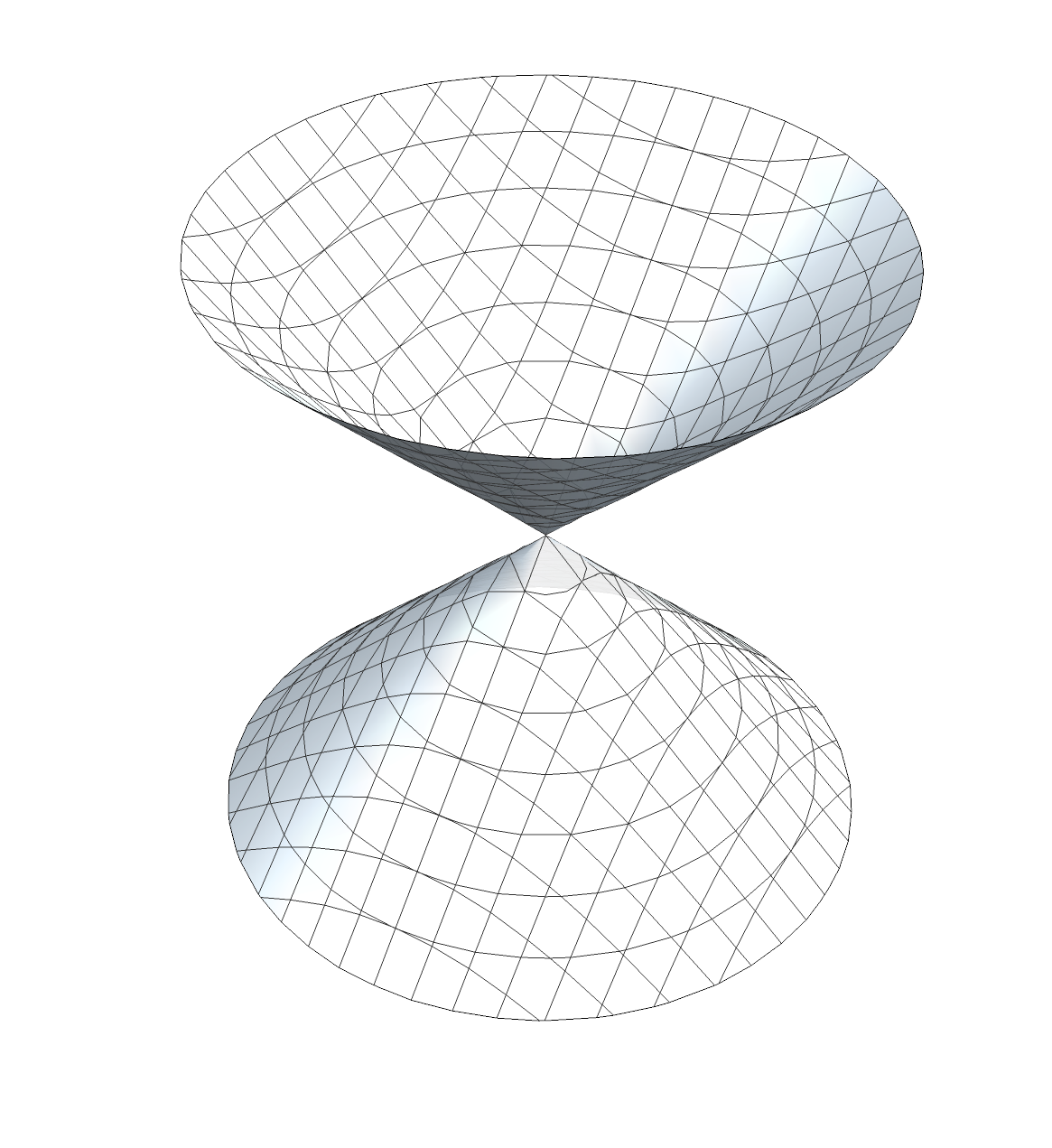}
    \caption{}\label{fig:A1}
  \end{subfigure}
  \qquad
  \begin{subfigure}{.2\textwidth}
    \includegraphics[width=\linewidth]{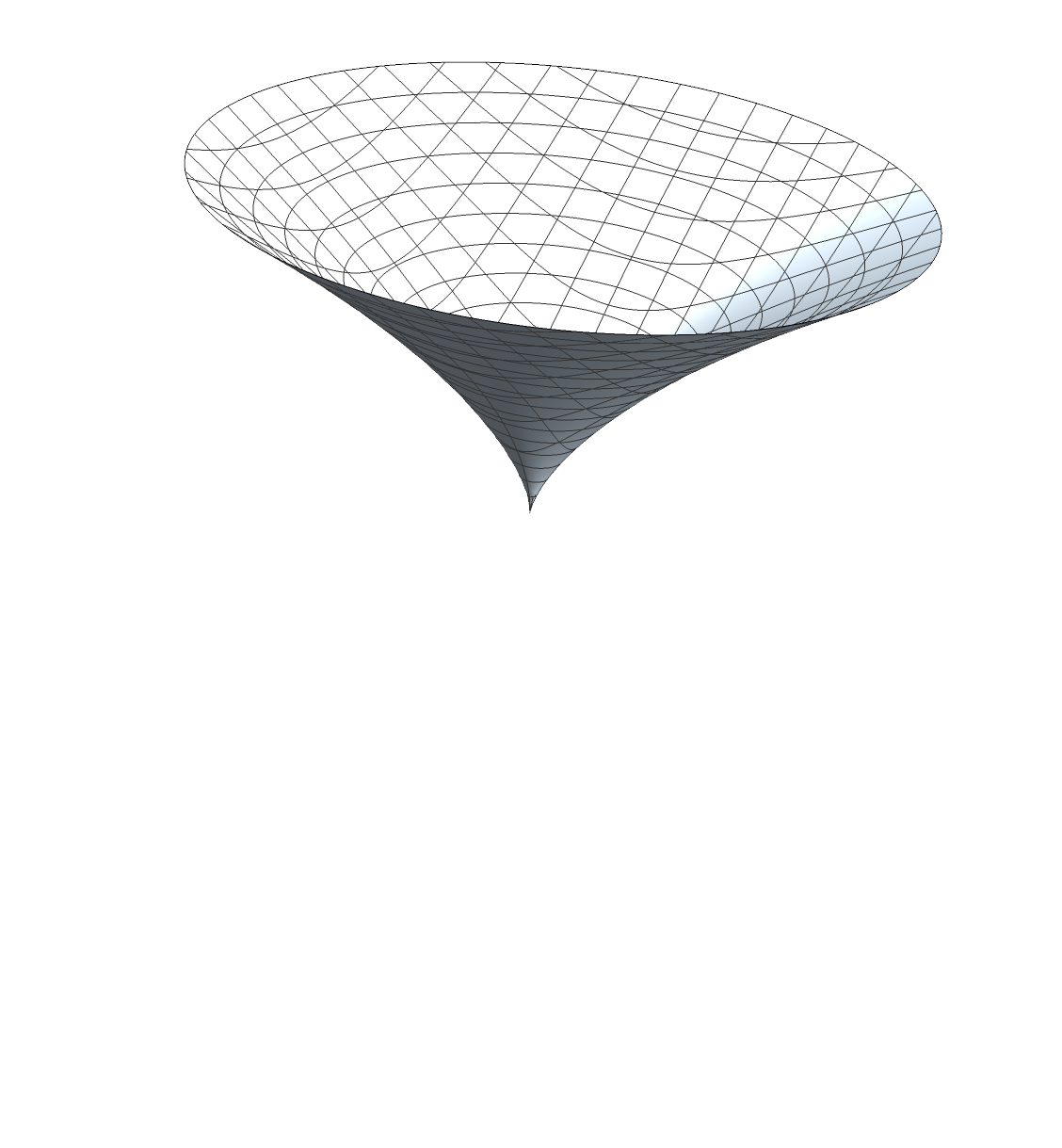}
    \caption{}\label{fig:A2}
  \end{subfigure}
  \caption{Singular charts on the weighted projective space $\PP(1,2,3)$: (a) the real-valued points in the chart $U_y$. (b) the real-valued points in the chart $U_z$.}
  \label{fig:wP_charts}
\end{figure}

There are weighted projective spaces of any dimension. Let~$a_1,a_2,\ldots,a_N$ be positive integers such that any subset of size~$N-1$ has no common factor, and consider
\[
\PP(a_1,a_2,\ldots,a_N) = (\CC^N \setminus \{0\})/\Cstar
\]
where the action of~$\lambda \in \Cstar$ identifies the points
\[
    (x_1,x_2,\ldots,x_N) \qquad \text{and} \qquad (\lambda^{a_1} x_1, \lambda^{a_2} x_2, \ldots, \lambda^{a_N} x_N)
\]
in~$\CC^N \setminus \{0\}$. The quotient~$\PP(a_1,a_2,\ldots,a_N)$ is an algebraic variety of dimension~$N-1$. A general point of~$\PP(a_1,a_2,\ldots,a_N)$ is smooth, but there can be singular points. Indeed,~$\PP(a_1,a_2,\ldots,a_N)$ is covered by~$N$ open sets
\[
    U_i = \{(X_1,\ldots,X_{i-1},1,X_{i+1},\ldots,X_N)\} \qquad i \in \{1,2,\ldots,N\}
\]
given by rescaling~$x_i$ to 1; here we take~$x_i \ne 0$ and set~$X_j = x_j/x_i$. The chart~$U_i$ is isomorphic to~$\CC^{N-1}/\mu_{a_i}$, where~$\mu_{a_i}$ acts on~$\CC^{N-1}$ with weights~$a_j$, $j \ne i$. In Reid's notation, this is the cyclic quotient singularity~$\frac{1}{a_i}(a_1,\ldots,\widehat{a}_i,\ldots,a_N)$; it is smooth if and only if~$a_i = 1$.

The topology of weighted projective space is very simple, with
\[
H^k\big(\PP(a_1,a_2,\ldots,a_N); \QQ) = 
\begin{cases}
    \QQ & \text{if $0 \leq k \leq 2N-2$ and $k$ is even;} \\
    0 & \text{otherwise.}
\end{cases}
\]
Hence every weighted projective space has second Betti number~$b_2 = 1$. There is a closed formula~\cite[Proposition~D.9]{CoatesCortiGalkinKasprzyk2016} for the regularized quantum period of~$X = \PP(a_1,a_2,\ldots,a_N)$:
\begin{equation} \label{eq:Ghat_wP_appendix}
    \Ghat_X(t) = \sum_{k = 0}^\infty \frac{(ak)!}{(a_1 k)! (a_2 k)! \cdots (a_N k)!} t^{ak}
\end{equation}
where~$a = a_1 + a_2 + \cdots + a_N$.
\subsubsection*{Toric varieties of Picard rank 2}
As well as weighted projective spaces, which are quotients of~$\CC^N \setminus \{0\}$ by an action of~$\Cstar$, we will consider varieties that arise as quotients of~$\CC^N \setminus S$ by~$\Cstar  \times \Cstar$, where~$S$ is a union of linear subspaces. These are examples of~\emph{toric varieties}~\cite{Fulton1993,CoxLittleSchenk2011}. Specifically, consider a matrix 
\begin{equation} \label{eq:weights_appendix}
    \begin{pmatrix}
        a_1 & a_2 & \cdots & a_N \\
        b_1 & b_2 & \cdots & b_N 
    \end{pmatrix}
\end{equation}
with non-negative integer entries and no zero columns. This defines an action of~$\Cstar  \times \Cstar$ on~$\CC^N$, where~$(\lambda,\mu) \in \Cstar  \times \Cstar$ identifies the points
\[
\begin{aligned}
    (x_1,x_2,\ldots,x_N) && \text{and} && (\lambda^{a_1} \mu^{b_1} x_1, \lambda^{a_2} \mu^{b_2} x_2, \ldots, \lambda^{a_N} \mu^{b_N} x_N)
\end{aligned}
\]
in~$\CC^N$. Set~$a = a_1 + a_2 + \ldots + a_N$ and~$b = b_1 + b_2 + \cdots + b_N$, and suppose that~$(a,b)$ is not a scalar multiple of~$(a_i,b_i)$ for any~$i$. This determines linear subspaces
\[
\begin{aligned}
    S_+ & = \left\{ (x_1,x_2,\ldots,x_N)\mid \text{$x_i = 0$ if $b_i/a_i < b/a$} \right\} \\
    S_- & = \left\{ (x_1,x_2,\ldots,x_N)\mid \text{$x_i = 0$ if $b_i/a_i > b/a$} \right\}
\end{aligned}
\]
of~$\CC^N$, and we consider the quotient
\begin{equation} \label{eq:rank_2_appendix}
X = (\CC^N \setminus S)/(\Cstar  \times \Cstar)
\end{equation}
where~$S = S_+ \cup S_-$. See e.g.~\cite[\S A.5]{BrownCortiZucconi2004}.

These quotients behave in many ways like weighted projective spaces. Indeed, if we take the weight matrix~\eqref{eq:weights_appendix} to be
\[
\begin{pmatrix}
    a_1 & a_2 & \cdots & a_N & 0 \\
    0 & 0 & \cdots & 0 & 1
\end{pmatrix}
\]
then~$X$ coincides with~$\PP(a_1,a_2,\ldots,a_N)$. We will consider only weight matrices such that the subspaces~$S_+$ and~$S_-$ both have dimension two or more; this implies that the second Betti number~$b_2(X) = 2$, and hence~$X$ is not a weighted projective space. We will refer to such quotients~\eqref{eq:rank_2_appendix} as~\emph{toric varieties of Picard rank two}, because general theory implies that the Picard lattice of~$X$ has rank two. The dimension of $X$ is $N-2$. As for weighted projective spaces, toric varieties of Picard rank two can have singular points, the precise form of which is determined by the weights~\eqref{eq:weights_appendix}. There is also a closed formula~\cite[Proposition~C.2]{CoatesCortiGalkinKasprzyk2016} for the regularized quantum period. Let~$C$ denote the cone in~$\RR^2$ defined by the equations~$a_i x + b_i y \geq 0$,~$i \in \{1,2,\ldots,N\}$. Then
\begin{equation}
    \label{eq:Ghat_rank_two_appendix}
    \Ghat_X(t) =\!\! \sum_{(k, l) \in \ZZ^2 \cap C} \frac{(ak + bl)!}{(a_1 k+b_1 l)! (a_2 k + b_2 l)! \cdots (a_N k + b_N l)!} t^{ak + bl}
\end{equation}
\subsubsection*{Classification results}
Weighted projective spaces with terminal quotient singularities have been classified in dimensions up to four; see Table~\ref{tab:wps_low_dim} for a summary. There are $35$ three-dimensional Fano toric varieties with terminal quotient singularities and Picard rank two~\cite{Kasprzyk2006}. There is no known classification of Fano toric varieties with terminal quotient singularities in higher dimension, even when the Picard rank is two.

\begin{table}[tbp]
  \centering
  \begin{tabular}{cccc}
      \toprule
      \multicolumn{4}{c}{Dimension} \\
      1 & 2 & 3 & 4 \\
      \midrule
      $\PP^1$ & $\PP^2$ & \text{7 cases} & \text{28\,686 cases} \\
      \phantom{\text{7 cases}} & \phantom{\text{7 cases}} & \text see~\cite{Kasprzyk2006} & see~\cite{Kasprzyk2013} \\
      \bottomrule
  \end{tabular}
    \caption{The classification of low-dimensional weighted projective spaces with terminal quotient singularities.}
  \label{tab:wps_low_dim}
\end{table}
\section{Supplementary Methods 1}\label{sec:supp_methods_1}
\subsubsection*{Data analysis: weighted projective spaces}
We computed an initial segment $(c_0,c_1,\ldots,c_m)$ of the regularized quantum period, with~$m \approx 100\,000$, for all the examples in the sample of $150\,000$ weighted projective spaces with terminal quotient singularities. We then considered~$\{\log c_d\}_{d\in S}$ where~$S= \{d \in \ZZ_{\geq 0}\mid c_d \neq 0\}$. To reduce dimension we fitted a linear model to the set~$\{(d, \log c_d)\mid d \in S\}$ and used the slope and intercept of this model as features. The linear fit produces a close approximation of the data. Figure~\ref{fig:error_histogram_pr1} shows the distribution of the standard errors for the slope and the $y$-intercept: the errors for the slope are between~$3.9\times 10^{-8}$ and~$1.4\times 10^{-5}$, and the errors for the~$y$-intercept are between~$0.0022$ and~$0.82$. As we will see below, the standard error for the $y$-intercept is a good proxy for the accuracy of the linear model. This accuracy decreases as the dimension grows -- see Figure~\ref{fig:intercept_histogram_by_dimension_pr1} --  but we will see below that this does not affect the accuracy of the machine learning classification.

\begin{figure}[tbp]
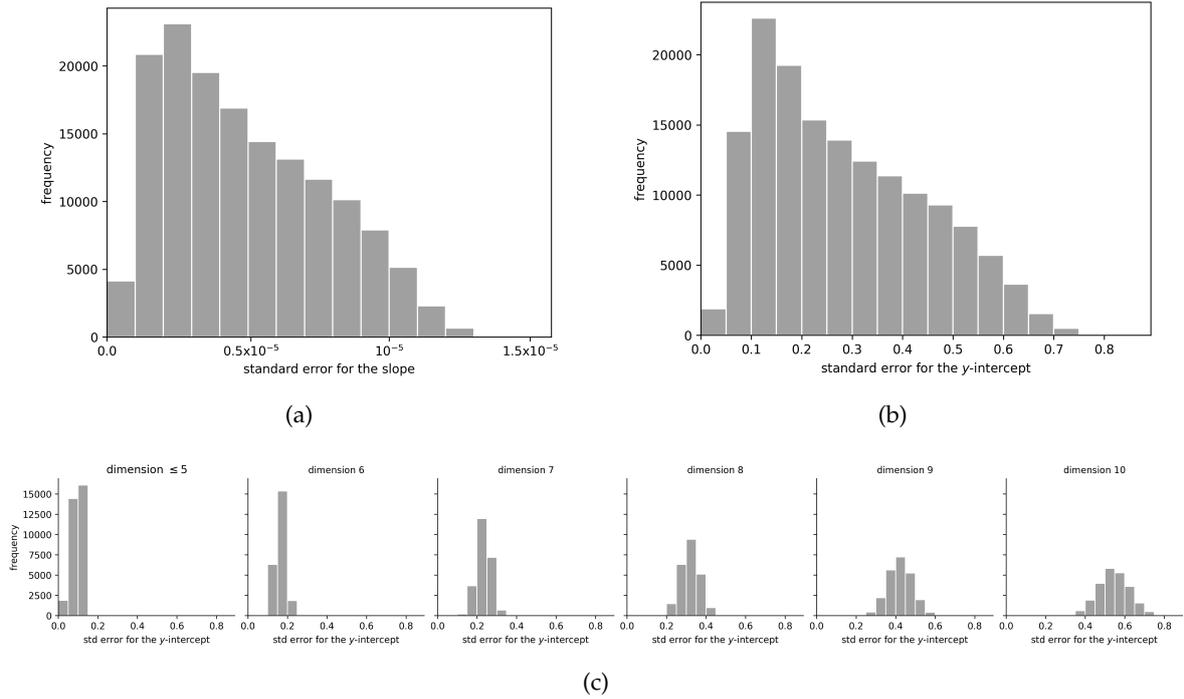

  \centering
  \begin{subfigure}{0.45\linewidth}
    \includegraphics[width=\linewidth]{standard_error_slope_pr1.png}
    \caption{}\label{fig:slope_histogram_pr1}
  \end{subfigure}
  \qquad
  \begin{subfigure}{0.45\linewidth}
    \includegraphics[width=\linewidth]{standard_error_intercept_pr1.png}
    \caption{}\label{fig:intercept_histogram_pr1}
  \end{subfigure} \\[2ex]

  \begin{subfigure}{\textwidth}
    \includegraphics[width=\linewidth]{standard_error_by_dimension_pr1.png}
    \caption{}\label{fig:intercept_histogram_by_dimension_pr1}
  \end{subfigure}
  \caption{Standard errors for the slope and $y$-intercept. The distribution of standard errors for the slope and $y$-intercept from the linear model applied to weighted projective spaces~$X$ with terminal quotient singularities: (a) standard error for the slope. (b) standard error for the~$y$-intercept. (c) standard error for the~$y$-intercept by dimension.}
  \label{fig:error_histogram_pr1}
\end{figure}
\subsubsection*{Data analysis: toric varieties of Picard rank 2}
We fitted a linear model to the set~$\{(d, \log c_d)\mid d \in S\}$ where~$S= \{d \in \ZZ_{\geq 0} \mid c_d \neq 0\}$, and used the slope and intercept of this linear model as features. The distribution of standard errors for the slope and~$y$-intercept of the linear model are shown in  Figure~\ref{fig:error_histogram_pr2}. The standard errors for the slope are small compared to the range of slopes, but in many cases the standard error for the $y$-intercept is relatively large. As Figure~\ref{fig:comparison_errors} illustrates, discarding data points where the standard error $\stderr$ for the $y$-intercept exceeds some threshold reduces apparent noise. As discussed above, we believe that this reflects inaccuracies in the linear regression caused by oscillatory behaviour in the initial terms of the quantum period sequence.

\begin{figure}[tbp]
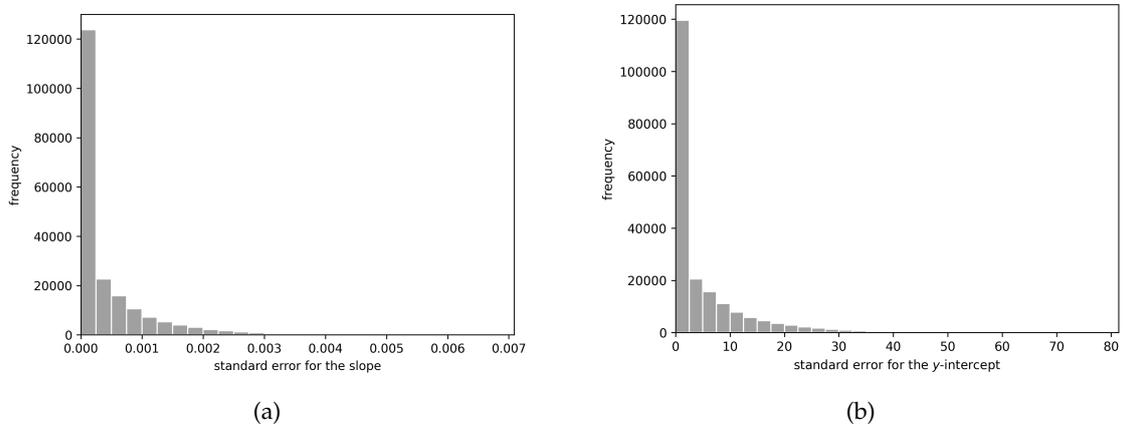

  \centering
  \begin{subfigure}{.45\textwidth}
    \includegraphics[width=\linewidth]{standard_error_slope_pr2.png}
    \caption{}\label{fig:slope_histogram}
  \end{subfigure}
  \qquad
  \begin{subfigure}{.45\textwidth}
    \includegraphics[width=\linewidth]{standard_error_intercept_pr2.png}
    \caption{}\label{fig:intercept_histogram}
  \end{subfigure}
  \caption{Standard errors for the slope and $y$-intercept. The distribution of standard errors for the slope and $y$-intercept from the linear model applied to toric varieties of Picard rank two with terminal quotient singularities: (a) standard error for the slope. (b) standard error for the~$y$-intercept.}
  \label{fig:error_histogram_pr2}
\end{figure}

\begin{figure}[tbp]
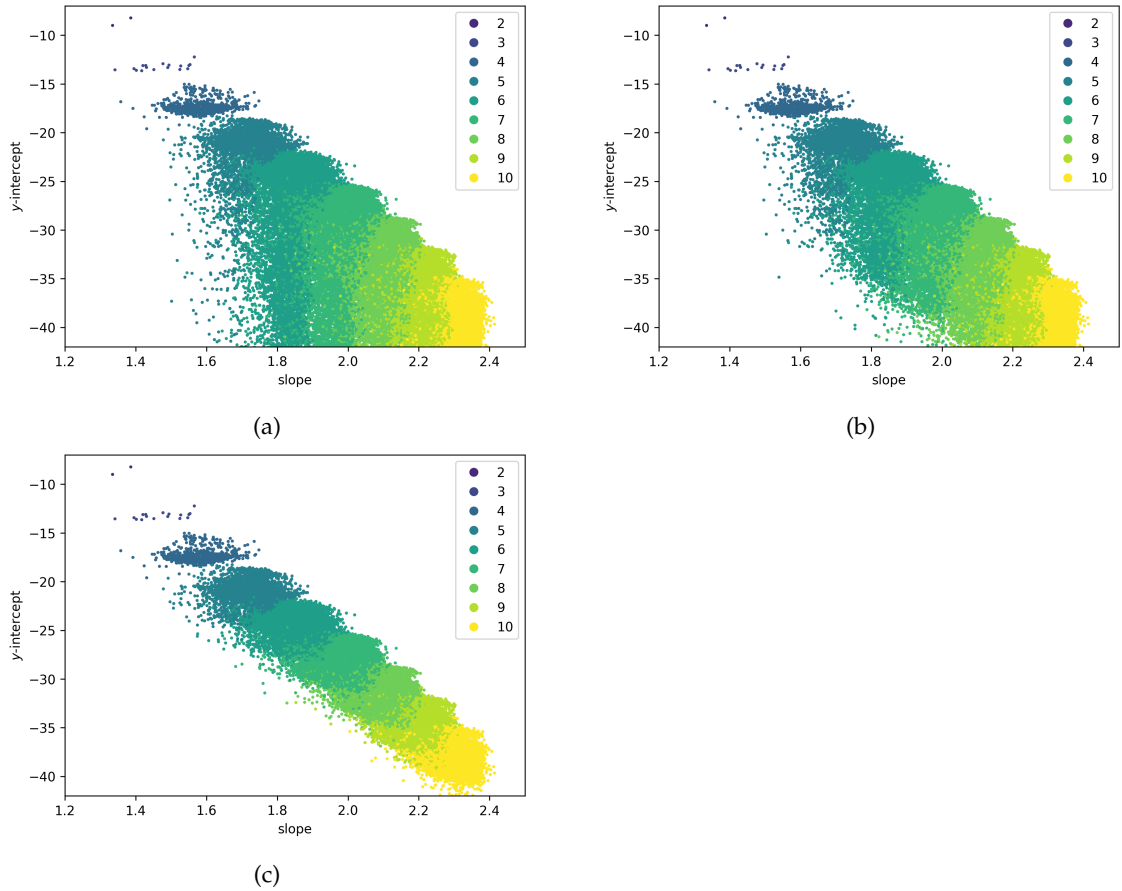

  \centering
  \begin{subfigure}[t]{.45\textwidth}
    \includegraphics[width=\linewidth]{colour_triangle_zoom.png}
    \caption{}\label{fig:all_standard_error}
  \end{subfigure}
  \qquad
  \begin{subfigure}[t]{.45\textwidth}
    \includegraphics[width=\linewidth]{colour_triangle_zoom_1.png}
    \caption{}\label{fig:less_1_standard_error}
  \end{subfigure}
  \begin{subfigure}[t]{.45\textwidth}
    \includegraphics[width=\linewidth]{colour_triangle_zoom_0-3.png}
    \caption{}\label{fig:less_0.2_standard_error}
  \end{subfigure}
  \qquad
  \begin{subfigure}[t]{.45\textwidth}
    \ 
  \end{subfigure}
  \caption{The slopes and $y$-intercepts from the linear model applied to toric varieties of Picard rank two with terminal quotient singularities. Data points are selected according to the standard error $\stderr$ for the $y$-intercept. The colour records the dimension of the toric variety. (a) All data points. (b) Points with $\stderr < 1$: 101\,183/200000 points. (c) Points with $\stderr < 0.3$: 67\,445/200000 points. }
  \label{fig:comparison_errors}
\end{figure}

\begin{Ex}\label{eg:rank_2_growth_appendix}
  Let us consider in more detail the toric variety from Example~\ref{eg:rank_2_growth}. In Figure~\ref{fig:linear_approximations_sm} we plot $\log c_d$ along with its linear approximation. Figure~\ref{fig:linear_approximation_0_250} shows only the first~$250$ terms, whilst Figure~\ref{fig:linear_approximation_1000_1250} shows the interval between the~$1000$th and the~$1250$th term. We see considerable deviation from the linear approximation among the first 250 terms; the deviation reduces for larger~$d$. 
\end{Ex}

\begin{figure}[tbp]
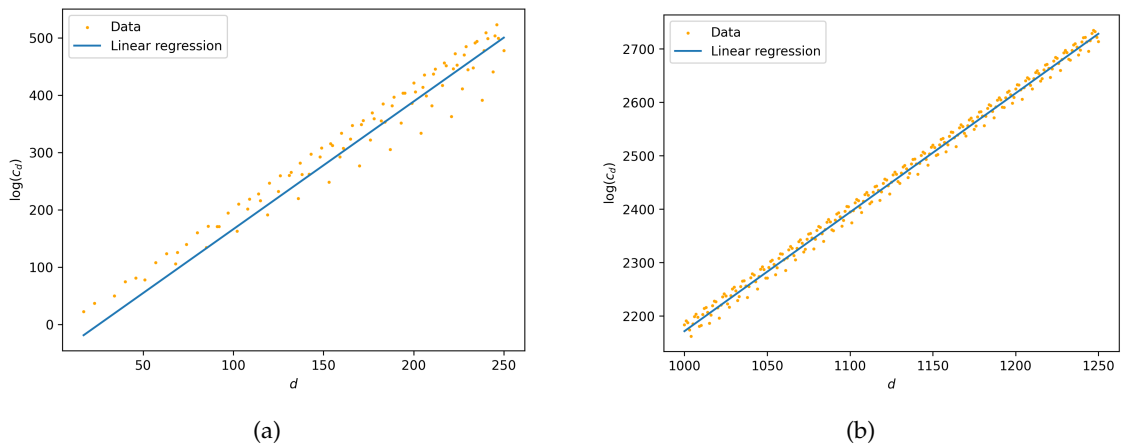

  \centering
  \begin{subfigure}{.45\textwidth}
    \includegraphics[width=\linewidth]{linear_approx_first250.png}
    \caption{}\label{fig:linear_approximation_0_250}
  \end{subfigure}
  \qquad
  \begin{subfigure}{.45\textwidth}
    \includegraphics[width=\linewidth]{linear_approx_last250.png}
    \caption{}\label{fig:linear_approximation_1000_1250}
  \end{subfigure}
  \caption{The logarithm of the non-zero coefficients $c_d$ for Example~\ref{eg:rank_2_growth}: (a) the first 250 terms. (b) terms between $d=1000$ and $d=1250$. In each case, the linear approximation is also shown.}
  \label{fig:linear_approximations_sm}
\end{figure}
\section{Supplementary Methods 2}\label{sec:supp_methods_2}
We performed our experiments using scikit-learn~\cite{scikit-learn}, a standard machine learning library for Python. The computations that produced the data shown in Figure~\ref{fig:hedgehogs} were performed using Mathematica~\cite{Mathematica}. All code required to replicate the results in this paper is available from Bitbucket under an MIT license~\cite{supporting-code}.
\subsubsection*{Weighted projective spaces}
We excluded dimensions one and two from the analysis, since there is only one weighted projective space in each case (namely~$\PP^1$ and~$\PP^2$). Therefore we have a dataset of~$149\,998$ slope-intercept pairs, labelled by the dimension which varies between three and ten. We standardised the features, by translating the means to zero and scaling to unit variance, and applied a Support Vector Machine~(SVM) with linear kernel and regularisation parameter~$C=10$. By looking at different train--test splits we obtained the learning curves shown in Figure~\ref{fig:learning_curve_sv_pr1}. The figure displays the mean accuracies for both training and validation data obtained by performing five random test-train splits each time: the shaded areas around the lines correspond to the~$1\sigma$ region, where $\sigma$ denotes the standard deviation. Using~$10\%$ (or more) of the data for training we obtained an accuracy of~$99.99\%$. In Figure~\ref{fig:decision_boundaries_pr1} we plot the decision boundaries computed by the~SVM between neighbouring dimension classes.

\begin{figure}[tbp]
  \centering
  \includegraphics[width=0.4\textwidth]{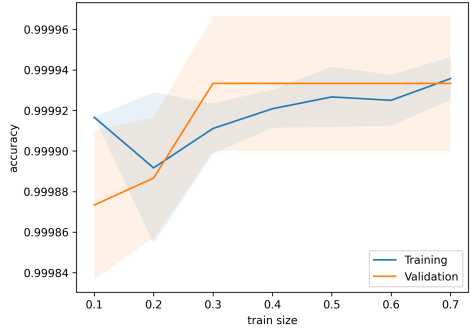}
  \caption{Learning curves for a Support Vector Machine with linear kernel applied to the dataset of weighted projective spaces. The plot shows the means of the training and validation accuracies for five different random train--test splits. The shaded regions show the~$1\sigma$ interval, where~$\sigma$ is the standard deviation.}\label{fig:learning_curve_sv_pr1}
\end{figure}

\begin{figure}[tbp]
  \centering
  \includegraphics[width=0.4\textwidth]{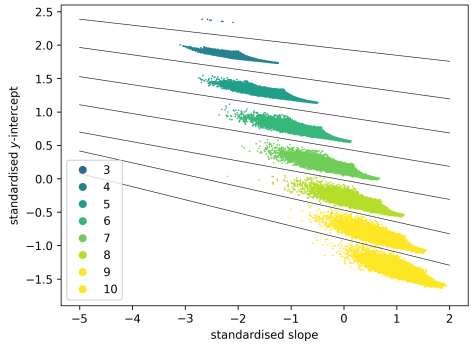}
  \caption{Decision boundaries computed from a Support Vector Machine with linear kernel trained on 70\% of the dataset of weighted projective spaces. Note that the data has been standardised.}\label{fig:decision_boundaries_pr1}
\end{figure}
\subsubsection*{Toric varieties of Picard rank 2}
In light of the discussion above, we restricted attention to toric varieties with Picard rank two such that the $y$-intercept standard error $\stderr$ is less than $0.3$. We also excluded dimension two from the analysis, since in this case there are only two varieties (namely,~$\PP^1\times\PP^1$ and the Hirzebruch surface~$\FF_1$). The resulting dataset contains $67\,443$ slope-intercept pairs, labelled by dimension; the dimension varies between three and ten, as shown in Table~\ref{tab:filtered_rank_2}. 

\begin{table}[tbp]
  \centering
  \begin{tabular}{crr}
  \toprule
  \multicolumn{3}{c}{Rank-two toric varieties with $\stderr < 0.3$} \\
  \midrule
  Dimension&Sample size&Percentage\\
  \midrule
  3&17&0.025\\
  4&758&1.124\\
  5&5\,504&8.161\\
  6&12\,497&18.530\\
  7&16\,084&23.848\\
  8&13\,701&20.315\\
  9&10\,638&15.773\\
  10&8\,244&12.224\\
  \midrule
  Total&67\,443&\\
  \bottomrule
  \end{tabular}
 \caption{The distribution by dimension among toric varieties of Picard rank two in our dataset with $\stderr < 0.3$.}
  \label{tab:filtered_rank_2}
\end{table}
\subsubsection*{Support Vector Machine}
We used a linear~SVM with regularisation parameter~$C=50$. By considering different train--test splits we obtained the learning curves shown in Figure~\ref{fig:learning_curve_svm_pr2}, where the means and the standard deviations were obtained by performing five random samples for each split. Note that the model did not overfit. We obtained a validation accuracy of~$88.2\%$ using~$70\%$ of the data for training. Figure~\ref{fig:decision_boundaries_pr2} shows the decision boundaries computed by the~SVM between neighbouring dimension classes. Figure~\ref{fig:confusion_matrices_svm} shows the confusion matrices for the same train--test split.

\begin{figure}[tbp]
  \centering
  \includegraphics[width=0.4\textwidth]{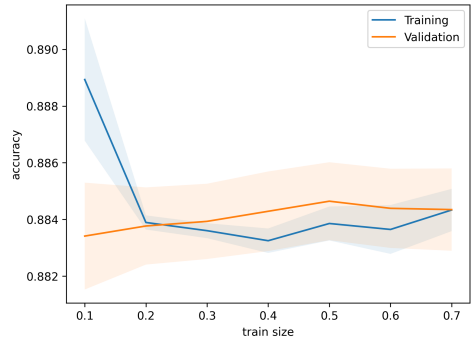}
  \caption{Learning curves for a Support Vector Machine with linear kernel applied to the dataset of toric varieties of Picard rank two. The plot shows the means of the training and validation accuracies for five different random train--test splits. The shaded regions show the~$1\sigma$ interval, where~$\sigma$ is the standard deviation.}\label{fig:learning_curve_svm_pr2}
\end{figure}

\begin{figure}[tbp]
    \centering
      \includegraphics[width=0.4\textwidth]{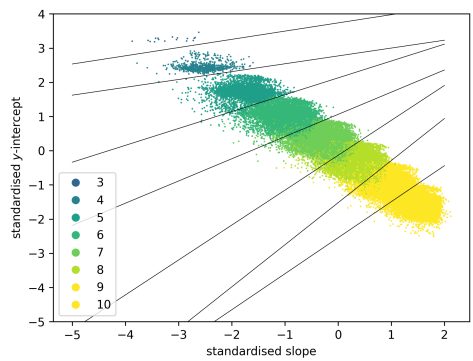}
    \caption{Decision boundaries computed from a Support Vector Machine with linear kernel trained on $70\%$ of the dataset of toric varieties of Picard rank two. Note that the data has been standardised.}
      \label{fig:decision_boundaries_pr2}
\end{figure}

\begin{figure}[tbp]
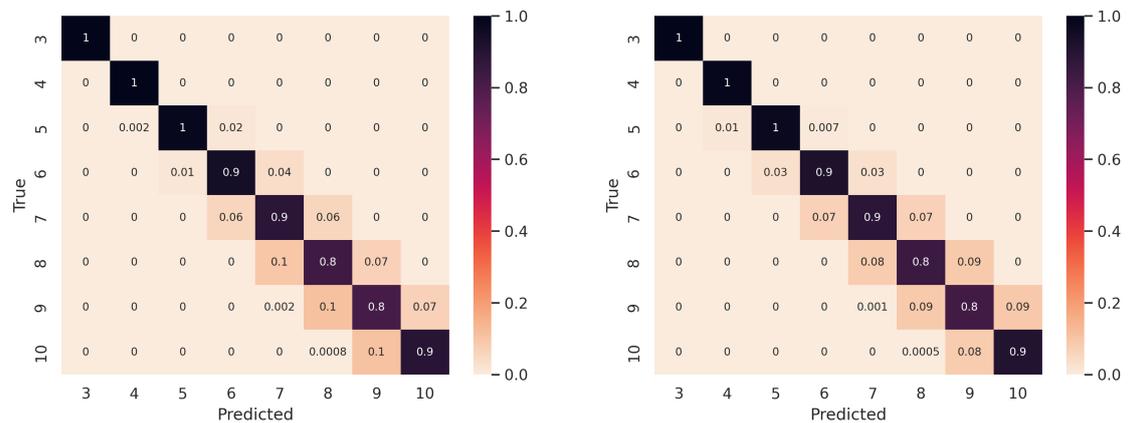

    \centering
    \begin{subfigure}{.45\textwidth}
      \includegraphics[width=\linewidth]{true_confusion_svm.png}
      \caption{Confusion matrix normalised with respect to the true values.}
    \end{subfigure}
    \qquad
    \begin{subfigure}{.45\textwidth}
      \includegraphics[width=\linewidth]{predicted_confusion_svm.png}
      \caption{Confusion matrix normalised with respect to the predicted values.}
    \end{subfigure}
    \caption{Confusion matrices for a Support Vector Machine with linear kernel trained on $70\%$ of the dataset of toric varieties of Picard rank two.}\label{fig:confusion_matrices_svm}
\end{figure}
\subsubsection*{Random Forest Classifier}
We used a Random Forest Classifier~(RFC) with~$1500$ estimators and the same features (slope and $y$-intercept for the linear model). By considering different train--test splits we obtained the learning curves shown in Figure~\ref{fig:learning_curve_rfc_pr2}; note again that the model did not overfit. Using 70\% of the data for training, the~RFC gave a validation accuracy of $89.4\%$. Figure~\ref{fig:confusion_matrices_rfc} on page~\pageref{fig:confusion_matrices_rfc} shows confusion matrices for the same train--test split.    

\begin{figure}[tbp]
  \centering
  \includegraphics[width=0.4\textwidth]{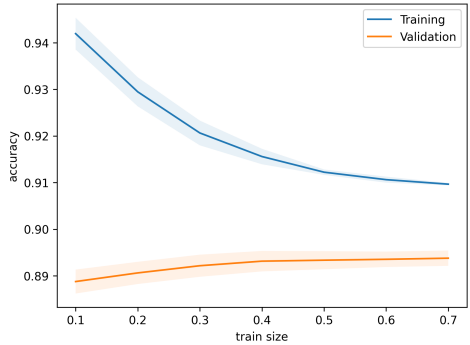}
  \caption{Learning curves for a Random Forest Classifier applied to the dataset of toric varieties of Picard rank two. The plot shows the means of the training and validation accuracies for five different random train--test splits. The shaded regions show the~$1\sigma$ interval, where~$\sigma$ is the standard deviation.}
  \label{fig:learning_curve_rfc_pr2}
\end{figure}

\begin{figure}[tbp]
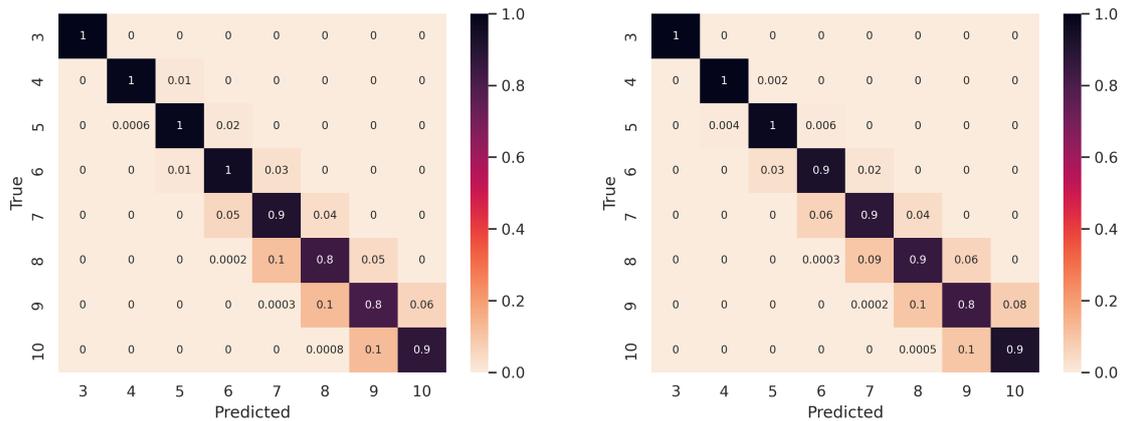

    \centering
    \begin{subfigure}{.45\textwidth}
      \includegraphics[width=\linewidth]{true_confusion_rfc.png}
      \caption{Confusion matrix normalised with respect to the true values.}
    \end{subfigure}
    \qquad
    \begin{subfigure}{.45\textwidth}
      \includegraphics[width=\linewidth]{predicted_confusion_rfc.png}
      \caption{Confusion matrix normalised with respect to the predicted values.}
    \end{subfigure}
    \caption{Confusion matrices for a Random Forest Classifier trained on $70\%$ of the dataset of toric varieties of Picard rank two.}\label{fig:confusion_matrices_rfc}
\end{figure}
\subsubsection*{Feed-forward neural network}
As discussed above, neural networks do not handle unbalanced datasets well, and therefore we removed the toric varieties with dimensions three, four, and five from our dataset: see Table~\ref{tab:filtered_rank_2}. We trained a Multilayer Perceptron~(MLP) classifier on the same features, using an~MLP with three hidden layers~$(10,30,10)$, Adam optimiser~\cite{KingmaDiederikBa2014}, and rectified linear activation function~\cite{AgarapAbien2018}. Different train--test splits produced the learning curve in Figure~\ref{fig:learning_curve_nn}; again the model did not overfit. Using~$70\%$ of the data for training, the~MLP gave a validation accuracy of~$88.7\%$. One could further balance the dataset, by randomly undersampling so that there are the same number of representatives in each dimension (8244 representatives: see Table~\ref{tab:filtered_rank_2}). This resulted in a slight decrease in accuracy: the better balance was outweighed by loss of data caused by undersampling.

\begin{figure}[tbp]
  \centering
  \includegraphics[width=0.4\textwidth]{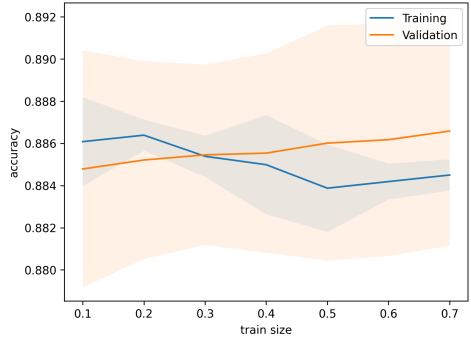}
  \caption{Learning curves for a Multilayer Perceptron classifier $\MLP_2$ applied to the dataset of toric varieties of Picard rank two and dimension at least six, using just the regression data as features. The plot shows the means of the training and validation accuracies for five different random train--test splits. The shaded regions show the~$1\sigma$ interval, where~$\sigma$ is the standard deviation.}\label{fig:learning_curve_nn}
\end{figure}
\subsubsection*{Feed-forward neural network with many features}
We trained an~MLP with the same architecture, but supplemented the features by including $\log c_d$ for $1 \leq d \leq 100$
(unless $c_d$ was zero in which case we set that feature to zero), as well as the slope and $y$-intercept as before. We refer to the previous neural network as $\MLP_2$, because it uses 2 features, and refer to this neural network as $\MLP_{102}$, because it uses 102 features. Figure~\ref{fig:learning_curve_nn_more} shows the learning curves obtained for different train--test splits. Using~$70\%$ of the data for training, the $\MLP_{102}$ model gave a validation accuracy of $97.7\%$.

\begin{figure}[tbp]
  \centering
  \includegraphics[width=0.4\textwidth]{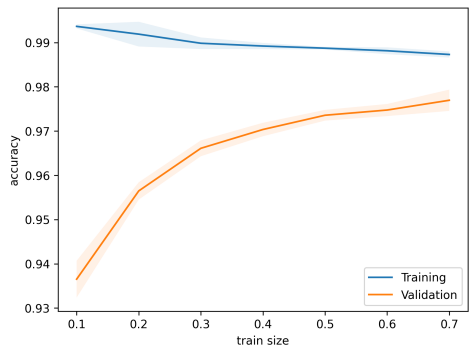}
  \caption{Learning curves for a Multilayer Perceptron classifier $\MLP_{102}$ applied to the dataset of toric varieties of Picard rank two and dimension at least six, using as features the regression data as well as $\log c_d$ for $1 \leq d \leq 100$. The plot shows the means of the training and validation accuracies for five different random train--test splits. The shaded regions show the~$1\sigma$ interval, where~$\sigma$ is the standard deviation.}\label{fig:learning_curve_nn_more}
\end{figure}

We do not understand the reason for the performance improvement between $\MLP_{102}$ and $\MLP_2$. But one possible explanation is the following. Recall that the first 1000 terms of the period sequence were excluded when calculating the slope and intercept, because they exhibit irregular oscillations that decay as $d$ grows. These oscillations reduce the accuracy of the linear regression. The oscillations may, however, carry information about the toric variety, and so including the first few values of $\log(c_d)$ potentially makes more information available to the model. For example, examining the pattern of zeroes at the beginning of the sequence $(c_d)$ sometimes allows one to recover the values of $a$ and $b$ -- see~\eqref{eq:Ghat_rank_two_appendix} for the notation. This information is relevant to estimating the dimension because, as a very crude approximation, larger $a$ and $b$ go along with larger dimension. Omitting the slope and intercept, however, and training on the coefficients $\log c_d$ for $1 \leq d \leq 100$ with the same architecture gave an accuracy of only 62\%.
\subsubsection*{Comparison of models}
The validation accuracies of the~SVM,~RFC, and the neural networks $\MLP_2$ and $\MLP_{102}$, on the same data set ($\stderr<0.3$, dimension between six and ten), are compared in Table~\ref{tab:compare_models}. Their confusion matrices are shown in Table~\ref{tab:compare_models_cm}. All models trained on only the regression data performed well, with the~RFC slightly more accurate than the~SVM and the neural network $\MLP_2$ slightly more accurate still. Misclassified examples are generally in higher dimension, which is consistent with the idea that misclassification is due to convergence-related noise. The neural network trained on the supplemented feature set, $\MLP_{102}$, outperforms all other models. However, as discussed above, feature importance analysis using SHAP values showed that the slope and the intercept were the most influential features in the prediction. 

\begin{table}[tbp]
  \centering
    \begin{tabular}{cccc}
        \toprule
        \multicolumn{4}{c}{ML models} \\
        SVM & RFC & $\MLP_2$ & $\MLP_{102}$ \\
        \midrule
        $87.7\%$ & $88.6\%$ & $88.7\%$ & $97.7 \%$ \\
        \bottomrule
    \end{tabular}
    \caption{Comparison of model accuracies. Accuracies for various models applied to the dataset of toric varieties of Picard rank two and dimension at least six: a Support Vector Machine with linear kernel, a Random Forest Classifier, and the neural networks $\MLP_2$ and $\MLP_{102}$.}
    \label{tab:compare_models}
  \end{table}
  
\begin{table}[tbp]
  \centering
  \begin{tabular}{c@{}m{.4\linewidth}@{}m{.4\linewidth}@{}}
      \toprule
      \multicolumn{1}{c}{Model}&\multicolumn{1}{c}{True confusion matrix}&\multicolumn{1}{c}{Predicted confusion matrix}\\
      \midrule
      SVM & \includegraphics[width=\linewidth]{true_confusion_svm_nn.png} & \includegraphics[width=\linewidth]{predicted_confusion_svm_nn.png} \\
      RFC & \includegraphics[width=\linewidth]{true_confusion_rfc_nn.png}& \includegraphics[width=\linewidth]{predicted_confusion_rfc_nn.png} \\
      $\MLP_2$ & \includegraphics[width=\linewidth]{true_confusion_nn.png} & \includegraphics[width=\linewidth]{predicted_confusion_nn.png}\\
      $\MLP_{102}$ & \includegraphics[width=\linewidth]{true_confusion_nn_more.png} & \includegraphics[width=\linewidth]{predicted_confusion_nn_more.png}\\
      \bottomrule
  \end{tabular}
\caption{Comparison of confusion matrices. Confusion matrices for various models applied to the dataset of toric varieties of Picard rank two and dimension at least six: a Support Vector Machine with linear kernel, a Random Forest Classifier, and the neural networks $\MLP_2$ and $\MLP_{102}$.}
  \label{tab:compare_models_cm}
\end{table}
\section{Supplementary Discussion}
\subsubsection*{Comparison with Principal Component Analysis}
An alternative approach to dimensionality reduction, rather than fitting a linear model to $\log c_d$, would be to perform Principal Component Analysis (PCA) on this sequence and retain only the first few principal components. Since the vectors $(c_d)$ have different patterns of zeroes -- $c_d$ is non-zero only if $d$ is divisible by the Fano index~$r$ of $X$ -- we need to perform PCA for Fano varieties of each index $r$ separately. We analysed this in the weighted projective space case, finding that for each~$r$ the first two components of PCA are related to the growth coefficients $(A,B)$ from Theorem~\ref{thm:wps} by an invertible affine-linear transformation. That is, our analysis suggests that the coefficients $(A,B)$ contain exactly the same information as the first two components of PCA. Note, however, that the affine-linear transformation that relates PCA to $(A,B)$ varies with the Fano index $r$. Using $A$ and $B$ as features therefore allows for meaningful comparison between Fano varieties of different index. Furthermore, unlike PCA-derived values, the coefficients $(A,B)$ can be computed for a single Fano variety, rather than requiring a sufficiently large collection of Fano varieties of the same index.
\subsubsection*{Towards more general Fano varieties}
Weighted projective spaces and toric varieties of Picard rank two are very special among Fano varieties. It is hard to quantify this, because so little is known about Fano classification in the higher-dimensional and non-smooth cases, but for example this class includes only 18\% of the $\QQ$-factorial terminal Fano toric varieties in three dimensions. On the other hand, one can regard weighted projective spaces and toric varieties of Picard rank two as representative of a much broader class of algebraic varieties called toric complete intersections. Toric complete intersections share the key properties that we used to prove Theorems~\ref{thm:wps} and~\ref{thm:rank_2} -- geometry that is tightly controlled by combinatorics, including explicit expressions for genus-zero Gromov--Witten invariants in terms of hypergeometric functions -- and we believe that the rigorous results of this paper will generalise to the toric complete intersection case. All smooth two-dimensional Fano varieties and 92 of the 105 smooth three-dimensional Fano varieties are toric complete intersections~\cite{CoatesCortiGalkinKasprzyk2016}. Many theorems in algebraic geometry were first proved for toric varieties and later extended to toric complete intersections and more general algebraic varieties; cf.~\cite{Givental1996, Givental1998, GrossSiebert2019} and~\cite{Givental2001,Teleman2012}.

The machine learning paradigm presented here, however, applies much more broadly. Since our models take only the regularized quantum period sequence as input, we expect that whenever we can calculate $\Ghat_X$ -- which is the case for almost all known Fano varieties -- we should be able to apply a machine learning pipeline to extract geometric information about $X$.
\subsection*{Data availability} 
Our datasets~\cite{wpsData,rank2Data} and the code for the Magma computer algebra system~\cite{BosmaCannonPlayoust1997} that was used to generate them are available from Zenodo~\cite{Zenodo} under a CC0 license. The data was collected using Magma~V2.25-4.
\subsection*{Code availability} 
All code required to replicate the results in this paper is available from Bitbucket under an MIT license~\cite{supporting-code}.
\subsection*{Acknowledgements}
TC~is funded by ERC Consolidator Grant~682603 and EPSRC Programme Grant~EP/N03189X/1. AK~is funded by EPSRC Fellowship~EP/N022513/1. SV~is funded by the EPSRC Centre for Doctoral Training in Geometry and Number Theory at the Interface, grant number~EP/L015234/1. We thank Giuseppe Pitton for conversations and experiments that began this project, and thank John Aston and Louis Christie for insightful conversations and feedback.  We also thank the anonymous referees for their careful reading of the text and their insightful comments, which substantially improved both the content and the presentation of the paper.
\bibliographystyle{plain}
\bibliography{bibliography}

\begin{thebibliography}{10}

\bibitem{LieAtlas}
Jeffrey Adams, Annegret Paul, Ran Cui, Susana Salamanca-Riba, Peter Trapa, Marc
  van Leeuwen, and David Vogan.
\newblock Atlas of {L}ie groups and representations.
\newblock Online, 2016.
\newblock \url{http://www.liegroups.org}.

\bibitem{AgarapAbien2018}
Abien~Fred Agarap.
\newblock Deep learning using rectified linear units ({ReLU}).
\newblock \arxiv{1803.08375}{cs.NE}, 2018.

\bibitem{AtiyahDrinfeldHitchinManin1978}
M.~F. Atiyah, N.~J. Hitchin, V.~G. Drinfeld, and Yu.~I. Manin.
\newblock Construction of instantons.
\newblock {\em Phys. Lett. A}, 65(3):185--187, 1978.
\newblock \doi{10.1016/0375-9601(78)90141-X}.

\bibitem{BosmaCannonPlayoust1997}
Wieb Bosma, John Cannon, and Catherine Playoust.
\newblock The {M}agma algebra system. {I}. {T}he user language.
\newblock {\em J. Symbolic Comput.}, 24(3-4):235--265, 1997.
\newblock \doi{10.1006/jsco.1996.0125}.

\bibitem{BrownCortiZucconi2004}
Gavin Brown, Alessio Corti, and Francesco Zucconi.
\newblock Birational geometry of 3-fold {M}ori fibre spaces.
\newblock In {\em The {F}ano {C}onference}, pages 235--275. Univ. Torino,
  Turin, 2004.

\bibitem{CandelasHorowitzStromingerWitten1985}
P.~Candelas, Gary~T. Horowitz, Andrew Strominger, and Edward Witten.
\newblock Vacuum configurations for superstrings.
\newblock {\em Nuclear Phys. B}, 258(1):46--74, 1985.
\newblock \doi{10.1016/0550-3213(85)90602-9}.

\bibitem{CandelasdelaOssaGreenParkes1991}
Philip Candelas, Xenia~C. de~la Ossa, Paul~S. Green, and Linda Parkes.
\newblock A pair of {C}alabi-{Y}au manifolds as an exactly soluble
  superconformal theory.
\newblock {\em Nuclear Phys. B}, 359(1):21--74, 1991.
\newblock \doi{10.1016/0550-3213(91)90292-6}.

\bibitem{CoatesCortiGalkinGolyshevKasprzyk2013}
Tom Coates, Alessio Corti, Sergey Galkin, Vasily Golyshev, and Alexander~M.
  Kasprzyk.
\newblock Mirror symmetry and {F}ano manifolds.
\newblock In {\em European {C}ongress of {M}athematics}, pages 285--300. Eur.
  Math. Soc., Z\"{u}rich, 2013.

\bibitem{CoatesCortiGalkinKasprzyk2016}
Tom Coates, Alessio Corti, Sergey Galkin, and Alexander~M. Kasprzyk.
\newblock Quantum periods for 3-dimensional {F}ano manifolds.
\newblock {\em Geom. Topol.}, 20(1):103--256, 2016.
\newblock \doi{10.2140/gt.2016.20.103}.

\bibitem{CoatesKasprzyk2022}
Tom Coates and Alexander~M. Kasprzyk.
\newblock Databases of quantum periods for {F}ano manifolds.
\newblock {\em Scientific Data}, 9(1):163, 2022.

\bibitem{wpsData}
Tom Coates, Alexander~M. Kasprzyk, and Sara Veneziale.
\newblock A dataset of 150000 terminal weighted projective spaces.
\newblock {\em Zenodo}, 2022.
\newblock \doi{10.5281/zenodo.5790079}.

\bibitem{rank2Data}
Tom Coates, Alexander~M. Kasprzyk, and Sara Veneziale.
\newblock A dataset of 200000 terminal toric varieties of {P}icard rank 2.
\newblock {\em Zenodo}, 2022.
\newblock \doi{10.5281/zenodo.5790096}.

\bibitem{supporting-code}
Tom Coates, Alexander~M. Kasprzyk, and Sara Veneziale.
\newblock Supporting code.
\newblock \url{https://bitbucket.org/fanosearch/mldim}, 2022.

\bibitem{ConwayCurtisNortonParkerWilson1985}
J.~H. Conway, R.~T. Curtis, S.~P. Norton, R.~A. Parker, and R.~A. Wilson.
\newblock {\em {$\Bbb{ATLAS}$} of finite groups}.
\newblock Oxford University Press, Eynsham, 1985.
\newblock Maximal subgroups and ordinary characters for simple groups. With
  computational assistance from J. G. Thackray.

\bibitem{CoxKatz1999}
David~A. Cox and Sheldon Katz.
\newblock {\em Mirror symmetry and algebraic geometry}, volume~68 of {\em
  Mathematical Surveys and Monographs}.
\newblock American Mathematical Society, Providence, RI, 1999.
\newblock \doi{10.1090/surv/068}.

\bibitem{CoxLittleSchenk2011}
David~A. Cox, John~B. Little, and Henry~K. Schenck.
\newblock {\em Toric varieties}, volume 124 of {\em Graduate Studies in
  Mathematics}.
\newblock American Mathematical Society, Providence, RI, 2011.
\newblock \doi{10.1090/gsm/124}.

\bibitem{Cremona2016}
John Cremona.
\newblock The {L}-functions and modular forms database project.
\newblock {\em Found. Comput. Math.}, 16(6):1541--1553, 2016.
\newblock \doi{10.1007/s10208-016-9306-z}.

\bibitem{DaviesEtAl2021}
Alex Davies, Petar Veli\v{c}kovi\'{c}, Lars Buesing, Sam Blackwell, Daniel
  Zheng, Nenad Toma\v{s}ev, Richard Tanburn, Peter Battaglia, Charles Blundell,
  Andr\'{a}s Juh\'{a}sz, Marc Lackenby, Geordie Williamson, Demis Hassabis, and
  Pushmeet Kohli.
\newblock Advancing mathematics by guiding human intuition with {AI}.
\newblock {\em Nature}, 600:70--74, 2021.
\newblock \doi{10.1038/s41586-021-04086-x}.

\bibitem{DelPezzo1887}
Pasquale Del~Pezzo.
\newblock Sulle superficie dell'$n^{\text{mo}}$ ordine immerse nello spazio ad
  $n$ dimensioni.
\newblock {\em Rend. del Circolo Mat. di Palermo}, 1:241--255, 1887.

\bibitem{Dolgachev1982}
Igor Dolgachev.
\newblock Weighted projective varieties.
\newblock In {\em Group actions and vector fields ({V}ancouver, {B}.{C}.,
  1981)}, volume 956 of {\em Lecture Notes in Math.}, pages 34--71. Springer,
  Berlin, 1982.
\newblock \doi{10.1007/BFb0101508}.

\bibitem{ErbinFinotello2021}
Harold Erbin and Riccardo Finotello.
\newblock Inception neural network for complete intersection {C}alabi--{Y}au
  3-folds.
\newblock {\em Machine Learning: Science and Technology}, 2(2):02LT03, 2021.

\bibitem{ErikssonRanestadSturmfelsSullivant2005}
Nicholas Eriksson, Kristian Ranestad, Bernd Sturmfels, and Seth Sullivant.
\newblock Phylogenetic algebraic geometry.
\newblock In {\em Projective varieties with unexpected properties}, pages
  237--255. Walter de Gruyter, Berlin, 2005.

\bibitem{Zenodo}
{European Organization For Nuclear Research} and {OpenAIRE}.
\newblock Zenodo, 2013.
\newblock \doi{10.25495/7GXK-RD71}.

\bibitem{Fano1947}
Gino Fano.
\newblock Nuove ricerche sulle variet\`a algebriche a tre dimensioni a
  curve-sezioni canoniche.
\newblock {\em Pont. Acad. Sci. Comment.}, 11:635--720, 1947.

\bibitem{Fulton1993}
William Fulton.
\newblock {\em Introduction to toric varieties}, volume 131 of {\em Annals of
  Mathematics Studies}.
\newblock Princeton University Press, Princeton, NJ, 1993.
\newblock \doi{10.1515/9781400882526}.

\bibitem{Givental1998}
Alexander Givental.
\newblock A mirror theorem for toric complete intersections.
\newblock In {\em Topological field theory, primitive forms and related topics
  ({K}yoto, 1996)}, volume 160 of {\em Progr. Math.}, pages 141--175.
  Birkh\"{a}user Boston, Boston, MA, 1998.

\bibitem{Givental1996}
Alexander~B. Givental.
\newblock Equivariant {G}romov-{W}itten invariants.
\newblock {\em Internat. Math. Res. Notices}, (13):613--663, 1996.
\newblock \doi{10.1155/S1073792896000414}.

\bibitem{Givental2001}
Alexander~B. Givental.
\newblock Semisimple {F}robenius structures at higher genus.
\newblock {\em Internat. Math. Res. Notices}, (23):1265--1286, 2001.
\newblock \doi{10.1155/S1073792801000605}.

\bibitem{GnedenkoUshakov2018}
Boris~V Gnedenko.
\newblock {\em Theory of probability}.
\newblock Routledge, 2018.

\bibitem{Greene1997}
B.~R. Greene.
\newblock String theory on {C}alabi-{Y}au manifolds.
\newblock In {\em Fields, strings and duality ({B}oulder, {CO}, 1996)}, pages
  543--726. World Sci. Publ., River Edge, NJ, 1997.

\bibitem{GreenePlesser1990}
B.~R. Greene and M.~R. Plesser.
\newblock Duality in {C}alabi-{Y}au moduli space.
\newblock {\em Nuclear Phys. B}, 338(1):15--37, 1990.
\newblock \doi{10.1016/0550-3213(90)90622-K}.

\bibitem{GrinisKasprzyk2013}
Roland Grinis and Alexander~M. Kasprzyk.
\newblock Normal forms of convex lattice polytopes.
\newblock \arxiv{1301.6641}{math.CO}, 2013.

\bibitem{GrossSiebert2019}
Mark Gross and Bernd Siebert.
\newblock Intrinsic mirror symmetry.
\newblock \arxiv{1909.07649}{math.AG}, 2019.

\bibitem{He2022}
Yang-Hui He.
\newblock Machine-learning mathematical structures.
\newblock {\em International Journal of Data Science in the Mathematical
  Sciences}, 1:23--47, 2023.

\bibitem{HoriVafa2000}
Kentaro Hori and Cumrun Vafa.
\newblock Mirror symmetry.
\newblock \href{https://arxiv.org/abs/hep-th/0002222}{arXiv:hep-th/0002222},
  2000.

\bibitem{IanoFletcher2000}
A.~R. Iano-Fletcher.
\newblock Working with weighted complete intersections.
\newblock In {\em Explicit birational geometry of 3-folds}, volume 281 of {\em
  London Math. Soc. Lecture Note Ser.}, pages 101--173. Cambridge Univ. Press,
  Cambridge, 2000.

\bibitem{Mathematica}
Wolfram~Research{,} Inc.
\newblock Mathematica, {V}ersion 13.1.
\newblock Champaign, IL, 2022.

\bibitem{Iskovskikh1977}
V.~A. Iskovskih.
\newblock Fano threefolds. {I}.
\newblock {\em Izv. Akad. Nauk SSSR Ser. Mat.}, 41(3):516--562, 717, 1977.

\bibitem{Iskovskikh1978}
V.~A. Iskovskih.
\newblock Fano threefolds. {II}.
\newblock {\em Izv. Akad. Nauk SSSR Ser. Mat.}, 42(3):506--549, 1978.

\bibitem{Iskovskikh1979}
V.~A. Iskovskih.
\newblock Anticanonical models of three-dimensional algebraic varieties.
\newblock In {\em Current problems in mathematics, {V}ol. 12 ({R}ussian)},
  pages 59--157, 239 (loose errata). VINITI, Moscow, 1979.

\bibitem{Kasprzyk2006}
Alexander~M. Kasprzyk.
\newblock Toric {F}ano three-folds with terminal singularities.
\newblock {\em Tohoku Math. J. (2)}, 58(1):101--121, 2006.
\newblock \doi{10.2748/tmj/1145390208}.

\bibitem{Kasprzyk2009}
Alexander~M. Kasprzyk.
\newblock Bounds on fake weighted projective space.
\newblock {\em Kodai Math. J.}, 32(2):197--208, 2009.
\newblock \doi{10.2996/kmj/1245982903}.

\bibitem{Kasprzyk2013}
Alexander~M. Kasprzyk.
\newblock Classifying terminal weighted projective space.
\newblock \arxiv{1304.3029}{math.AG}, 2013.

\bibitem{KingmaDiederikBa2014}
Diederik~P Kingma and Jimmy Ba.
\newblock Adam: A method for stochastic optimization.
\newblock \arxiv{1412.6980}{cs.LG}, 2014.

\bibitem{Kollar1987}
J\'{a}nos Koll\'{a}r.
\newblock The structure of algebraic threefolds: an introduction to {M}ori's
  program.
\newblock {\em Bull. Amer. Math. Soc. (N.S.)}, 17(2):211--273, 1987.
\newblock \doi{10.1090/S0273-0979-1987-15548-0}.

\bibitem{KollarMori1998}
J\'{a}nos Koll\'{a}r and Shigefumi Mori.
\newblock {\em Birational geometry of algebraic varieties}, volume 134 of {\em
  Cambridge Tracts in Mathematics}.
\newblock Cambridge University Press, Cambridge, 1998.
\newblock \doi{10.1017/CBO9780511662560}.

\bibitem{KreuzerSkarke2000}
Maximilian Kreuzer and Harald Skarke.
\newblock Complete classification of reflexive polyhedra in four dimensions.
\newblock {\em Adv. Theor. Math. Phys.}, 4(6):1209--1230, 2000.
\newblock \doi{10.4310/ATMP.2000.v4.n6.a2}.

\bibitem{KreuzerSkarke2004}
Maximilian Kreuzer and Harald Skarke.
\newblock P{ALP}: a package for analysing lattice polytopes with applications
  to toric geometry.
\newblock {\em Comput. Phys. Comm.}, 157(1):87--106, 2004.
\newblock \doi{10.1016/S0010-4655(03)00491-0}.

\bibitem{LevittHajijSazdanovic2022}
Jesse~SF Levitt, Mustafa Hajij, and Radmila Sazdanovic.
\newblock Big data approaches to knot theory: Understanding the structure of
  the {J}ones polynomial.
\newblock {\em Journal of Knot Theory and Its Ramifications}, 31(13):2250095,
  2022.

\bibitem{LundbergLee2017}
Scott~M Lundberg and Su-In Lee.
\newblock A unified approach to interpreting model predictions.
\newblock {\em Advances in neural information processing systems},
  30:4765--4774, 2017.

\bibitem{MoriMukai1982}
Shigefumi Mori and Shigeru Mukai.
\newblock Classification of {F}ano {$3$}-folds with {$B\sb{2}\geq 2$}.
\newblock {\em Manuscripta Math.}, 36(2):147--162, 1981/82.
\newblock \doi{10.1007/BF01170131}.

\bibitem{MoriMukai1982Erratum}
Shigefumi Mori and Shigeru Mukai.
\newblock Erratum: ``{C}lassification of {F}ano 3-folds with {$B_2\geq 2$}''.
\newblock {\em Manuscripta Math.}, 110(3):407, 2003.
\newblock \doi{10.1007/s00229-002-0336-2}.

\bibitem{NiederreiterXing2009}
Harald Niederreiter and Chaoping Xing.
\newblock {\em Algebraic geometry in coding theory and cryptography}.
\newblock Princeton University Press, Princeton, NJ, 2009.

\bibitem{scikit-learn}
F.~Pedregosa, G.~Varoquaux, A.~Gramfort, V.~Michel, B.~Thirion, O.~Grisel,
  M.~Blondel, P.~Prettenhofer, R.~Weiss, V.~Dubourg, J.~Vanderplas, A.~Passos,
  D.~Cournapeau, M.~Brucher, M.~Perrot, and E.~Duchesnay.
\newblock Scikit-learn: Machine learning in {P}ython.
\newblock {\em Journal of Machine Learning Research}, 12:2825--2830, 2011.

\bibitem{Polchinski2005}
Joseph Polchinski.
\newblock {\em String theory. {V}ol. {II}}.
\newblock Cambridge Monographs on Mathematical Physics. Cambridge University
  Press, Cambridge, 2005.
\newblock Superstring theory and beyond, Reprint of 2003 edition.

\bibitem{Teleman2012}
Constantin Teleman.
\newblock The structure of 2{D} semi-simple field theories.
\newblock {\em Invent. Math.}, 188(3):525--588, 2012.
\newblock \doi{10.1007/s00222-011-0352-5}.

\bibitem{vanLintvanderGeer1988}
Jacobus~H. van Lint and Gerard van~der Geer.
\newblock {\em Introduction to coding theory and algebraic geometry}, volume~12
  of {\em DMV Seminar}.
\newblock Birkh\"{a}user Verlag, Basel, 1988.
\newblock \doi{10.1007/978-3-0348-9286-5}.

\bibitem{Wagner2021}
Adam~Zsolt Wagner.
\newblock Constructions in combinatorics via neural networks.
\newblock \arxiv{2104.14516}{math.CO}, 2021.

\bibitem{WuDeLoera2022}
Yue Wu and Jes{\'u}s~A De~Loera.
\newblock Turning mathematics problems into games: Reinforcement learning and
  {G}r\"obner bases together solve integer feasibility problems.
\newblock \arxiv{2208.12191}{cs.LG}, 2022.

\end{thebibliography}
\end{document}